\theoremstyle{definition}
\newtheorem{Def}{Definition}[section]
\newtheorem{Eg}{Example}[section]
\theoremstyle{plain}
\newtheorem{Prop}[Def]{Proposition}
\newtheorem{Lem}[Def]{Lemma}
\newtheorem{Thm}[Def]{Theorem}
\newtheorem{Cor}[Def]{Corollary}
\journal{Stochastic Processes and their Applications}
\begin{document}

\begin{frontmatter}

%% Title, authors and addresses

%% use the tnoteref command within \title for footnotes;
%% use the tnotetext command for theassociated footnote;
%% use the fnref command within \author or \affiliation for footnotes;
%% use the fntext command for theassociated footnote;
%% use the corref command within \author for corresponding author footnotes;
%% use the cortext command for theassociated footnote;
%% use the ead command for the email address,
%% and the form \ead[url] for the home page:
%% \title{Title\tnoteref{label1}}
%% \tnotetext[label1]{}
%% \author{Name\corref{cor1}\fnref{label2}}
%% \ead{email address}
%% \ead[url]{home page}
%% \fntext[label2]{}
%% \cortext[cor1]{}
%% \affiliation{organization={},
%%             addressline={},
%%             city={},
%%             postcode={},
%%             state={},
%%             country={}}
%% \fntext[label3]{}

\title{Regularization Effects of Time Integration on Gaussian Process Functionals}

%% use optional labels to link authors explicitly to addresses:
%% \author[label1,label2]{}
%% \affiliation[label1]{organization={},
%%             addressline={},
%%             city={},
%%             postcode={},
%%             state={},
%%             country={}}
%%
%% \affiliation[label2]{organization={},
%%             addressline={},
%%             city={},
%%             postcode={},
%%             state={},
%%             country={}}

\author[label1]{Takafumi Amaba} %% Author name
\ead{fmamaba@fukuoka-u.ac.jp}

%% Author affiliation
\affiliation[label1]{
    organization={Department of Applied Mathematics, Fukuoka university},%Department and Organization
    addressline={8-19-1 Nanakuma, Jonan-ku}, 
    city={Fukuoka},
    postcode={814-0180}, 
    state={},
    country={Japan},
}

\author[label2]{Marie Kratz}
\ead{kratz@essec.edu}
\affiliation[label2]{
    organization={ESSEC Business School, CREAR},
    % addressline={},
    city={Cergy-Pontoise},
    % postcode={},
    % state={},
    country={France}
}

%% Abstract
\begin{abstract}
%% Text of abstract
In this paper, we investigate the regularization effects, in the sense of Malliavin calculus, on functionals of Gaussian processes induced by time integration, focusing on their covariance functions. We study several examples of important covariance functions classes to verify whether they satisfy the sufficient conditions proposed for regularization. Additionally, we derive a weak implication for the smoothness of level-crossing functionals.
\end{abstract}

%%Graphical abstract
% \begin{graphicalabstract}
% %\includegraphics{grabs}
% \end{graphicalabstract}

%%Research highlights
% \begin{highlights}
% \item Research highlight 1
% \item Research highlight 2
% \end{highlights}

%% Keywords
\begin{keyword}
Brownian motion \sep
chaos expansions \sep
covariance class \sep
Malliavin calculus \sep
stationary process \sep
multiple Wiener--It\^o integrals
%% keywords here, in the form: keyword \sep keyword

%% PACS codes here, in the form: \PACS code \sep code

%% MSC codes here, in the form: \MSC code \sep code
%% or \MSC[2008] code \sep code (2000 is the default)
\MSC[2020]{
    60G10 \sep % Stationary stochastic processes
    60G15 \sep % Gaussian processes
    60H07 \sep % Stochastic calculus of variations and the Malliavin calculus
    60J55 % Local time and additive functionals
}

\end{keyword}

\end{frontmatter}

%% Add \usepackage{lineno} before \begin{document} and uncomment 
%% following line to enable line numbers
%% \linenumbers

%% main text
%%

%%%%%%%%%%%%%%%%%%%%%%%%%%%%%%%%%%%%%%%%
\section{Introduction}
\label{Intro} %@@@@@@@@@@@@@@@@@@@@@@@@@ LABEL: Intro
%%%%%%%%%%%%%%%%%%%%%%%%%%%%%%%%%%%%%%%%

When studying a given function, calculating its derivative is a standard procedure in mathematics. Similarly, when studying a random variable and its distribution, it is natural to consider whether it is possible to apply the same approach of differentiation that we use for functions.
The Malliavin calculus, initiated by P.~Malliavin (\cite{Ma97}) and developed through studies by D.~W.~Stroock, J.~M.~Bismut, P.~A.~Meyer (\cite{Me84}), S.~Watanabe (\cite{watanabe-book, Wa93}) and D.~Nualart (\cite{Nualart-book06}), among others, provides a framework for discussing the smoothness of random variables as functions on a probability space. In this paper, we denote by $\mathbb{D}_{2}^{s}$ the Hilbert space consisting of the `$s$-times differentiable' Meyer–Watanabe distribution (which is denoted $\mathbb{D}^{2,s}$ in the book by Ikeda–Watanabe~\cite{IW89}).

It is important to note that, from the perspective of approximation theory for a given random variable (such as the Wiener–Poisson functional), identifying the `differentiability index' $s$ is essential. This index represents the largest value of $s$ such that the space $\mathbb{D}_{2}^{s}$ contains the random variable. Such considerations have been addressed in many works, including \cite{Ami92,NVives92b,AkAmOk13,Ama16,ALMS19,NYY22,GeGeLa13,GeGe04,GeHu07}, and others.

%Let us remark that, as shown in studies by Amine (\cite{Ami92}), Akahori–Amaba–Okuma (\cite{AkAmOk13}), Amaba (\cite{Ama16}), Amaba–Liu–Makhlouf–Saidaoui (\cite{ALMS19}), Nishimura–Yasutomi–Yuasa (\cite{NYY22}), Geiss–Geiss–Laukkarinen (\cite{GeGeLa13}), Geiss–Geiss (\cite{GeGe04}), Geiss–Hujo (\cite{GeHu07}), and so on, it is very important from the point of view of the approximation theory of a given random variable (Wiener–Poisson functional) to identify the `differentiability index' $s$, namely how large $s$ is such that the space $\mathbb{D}_{2}^{s}$ contains the random variable.

For a given stochastic process $X = \{ X_{t} \}_{t \in \mathbf{R}}$ and a function $\Lambda (x)$, the smoothness of $\Lambda (X_{t})$ can be formulated within the framework of Malliavin calculus, by considering for which $s$ the random variable (or Meyer--Watanabe distribution) $\Lambda (X_{t})$ belongs to $\mathbb{D}_{2}^{s}$. Then (Bochner) integration of $\Lambda (X_{t})$ with respect to time $t$ makes it, of course, smoother with respect to the time direction. Apart from this, in some situations, the smoothness increases in the direction of the underlying probability space by taking integral with respect to time. This is a phenomenon we call a `regularization effect,' and this is the subject of this paper.

When $X_{t}$ is the solution to a stochastic differential equation with smooth coefficients, particularly when the diffusion coefficient is uniformly nondegenerate, it is known that the smoothness of $\int_{0}^{t} \Lambda (X_{s}) \,\mathrm{d}s$ is one order higher than that of $\Lambda (X_{t})$; see e.g. \cite{NVives92b,im:pv,Tu13,amaba:ryu}. 

% When $X_{t}$ is obtained as a solution to a stochastic differential equation whose coefficients are smooth, especially when the diffusion coefficient is uniformly nondegenerate, it is known that the smoothness of $\int_{0}^{t} \Lambda (X_{s}) \,\mathrm{d}s$ is one higher than that of $\Lambda (X_{t})$; see e.g. \cite{NVives92b,Tu13,amaba:ryu}. 
This provides an explanation from the perspective of Malliavin calculus that local time, which has the heuristic expression including `$\int_{0}^{t} \delta_{0} ( X_{s} ) \,\mathrm{d}s$,' is determined as a random variable rather than a Meyer–Watanabe distribution. One of the key ideas behind this is the elliptic regularity theorem for the generators associated with the solution to the stochastic differential equation, which further proves this regularization effect by formulating Ito's formula for distributions.

In this paper, we show that such a regularization phenomenon also occurs when $X_{t}$ is a differentiable stationary Gaussian process. Since $X_{t}$ is differentiable with respect to $t$ in our case, let us consider an integrable function of the form $\Lambda ( X_{t} , \dot{X}_{t} )$ in a slightly more general way. Interestingly, the degree of regularization differs from that of the diffusion case above. Furthermore, the key to the mechanism of regularization is the Gauss hypergeometric theorem, which is also completely different from the diffusion case.

Now let us outline our main result, which will be refined with the precise conditions in Theorem~\ref{Thm:raise}.
% we are in a position to state (a rough version of) our main result. 

Let $r$ be the covariance function of the Gaussian stationary process $X$.
We denote by $\mathscr{S}^{\prime}(\mathbf{R}^{k})$ the space of all tempered distributions on $\mathbf{R}^{k}$.

\vspace{1ex}
%\begin{Thm}[See Theorem~\ref{Thm:raise} for the precise statement] %%%%%%%%%%%%%%%%%%%%%%%%%%%
%\label{Thm:main} %@@@@@@@@@@@@@@@@@@@@@@@@@ LABEL: Thm:main
{\bf Theorem} (See Theorem~\ref{Thm:raise} for the precise statement).

{\it 
Let $\alpha \in \mathbf{R}$ be arbitrary.
There exists a constant $c=c(r) > 0$ such that the following holds true:
\begin{itemize}
\item[{\rm (1)}]
%Under (A1)(roughly speaking, 
If $r$ is twice continuously differentiable (under Condition (A1), to be precise), for any
$\Lambda \in \mathscr{S}^{\prime} (\mathbf{R})$ with $\Lambda ( X_{0} ) \in \mathbb{D}_{2}^{\alpha}$,
it holds that
$$
\Vert \int_{0}^{1} \Lambda ( X_{t} ) \,\mathrm{d}t \Vert_{2,\alpha + \frac{1}{2}} \leqslant c\Vert \Lambda ( X_{0} )\Vert_{2,\alpha} .
$$
\item[{\rm (2)}] 
If $r$ is fourth differentiable (under (A2), to be precise), for any
$\Lambda \in \mathscr{S}^{\prime} (\mathbf{R}^{2})$ with $\Lambda ( X_{0}, \dot{X}_{0} ) \in \mathbb{D}_{2}^{\alpha}$,
it holds that
$$
\Vert \int_{0}^{1} \Lambda ( X_{t}, \dot{X}_{t} ) \,\mathrm{d}t \Vert_{2,\alpha + \frac{1}{2}} \leqslant c \Vert \Lambda ( X_{0}, \dot{X}_{0} ) \Vert_{2,\alpha}.
$$
\end{itemize}
}
%
%\end{Thm} %%%%%%%%%%%%%%%%%%%%%%%%%%%%%

We can rephrase this result as: {\it For smooth Gaussian processes, the Bochner integral $\int_{0}^{1}\mathrm{d}t$ regularizes $\Lambda ( X_{t}, \dot{X}_{t} )$ by at least $1/2$},
which should be compared with the result \cite[Corollary~1.6]{amaba:ryu}.
For the degree of regularization, this is the best possible result in the sense that the above holds for all $\Lambda$.

An interesting question arises regarding the conclusions that may be drawn for the `level crossing functional.' The current theorem tells us that it belongs to $\displaystyle \cap_{s<0} \mathbb{D}_{2}^{s}$.
However, this result is slightly weaker than the conclusion that the level crossing functional belongs to $L_{2}$ ($= \mathbb{D}_{2}^{0}$), as suggested by the Geman condition (see \cite{geman,probasurveyMK}, and discussed later in Corollary~\ref{Cor:Ocup_Sob}). This discrepancy indicates that a stronger form of regularization may be acting on the level crossing functional, which cannot be fully explained by the present general theory. A more detailed analysis of this aspect, using another approach, will be provided in our future work.

{\it Structure of the paper.} In Section~\ref{Sec:fmwork}, we present the mathematical framework of our study, as well as the conditions on the covariance function of the Gaussian process under which our main regularization result will hold. The main result and its application to level-crossing functionals are provided in Section~\ref{Sec:Results}, while their proofs and other technical results, of independent interest, are deferred to Section~\ref{Sec:Proof}. Important classes of covariance functions
(see e.g., \cite[Chapter~4]{WR06}),
like Mat\'ern classes, are studied in Section~\ref{Sec:Examples} to verify whether they satisfy the proposed conditions under which the main result holds, to make it also relevant in practice.

%%%%%%%%%%%%%%%%%%%%%%%%%%%%%%%%%%%%%%%%
\section{Mathematical Framework}
\label{Sec:fmwork}
%%%%%%%%%%%%%%%%%%%%%%%%%%%%%%%%%%%%%%%%

\subsection{Notations}
\label{ssec:notation}
%To make this paper to be self-contained, we start with the definition of Gaussian processes.

Let $(\Omega , \mathcal{F}, \mathbb{P} )$ be a probability space.
Let $\mathbf{Z}^{+}$ denote the set of non-negative integers, $\mathbf{N}$ the set of positive integers, and $\mathbf{R}^{+}$ the set of non-negative real numbers.

Recall that, if $X = \{ X_{t} \}_{t \geqslant 0}$ is a Gaussian process on $\mathbf{R}$,
then the mapping
$ t \in [0,+\infty ) \mapsto X_{t} \in L_{2}( \Omega ) $
is injective.
Indeed, let $s,t \in \mathbf{R}$ be such that $s \neq t$.
If $X_{s} = X_{t}$, then
$\sigma_{11} = \mathrm{Var}(X_{s})$,
$\sigma_{12} = \mathrm{Cov}(X_{s}, X_{t})$,
$\sigma_{21} = \mathrm{Cov}(X_{t}, X_{s})$
and
$\sigma_{22} = \mathrm{Var}(X_{t})$
are equal, say $\sigma$.
Then
$$
\Sigma
=
\left(\begin{array}{cc}
\sigma & \sigma \\
\sigma & \sigma
\end{array}\right)
$$
is not invertible, which contradicts the definition of a Gaussian process.
% to Definition~\ref{Def:GP}--(1a).
Hence, it must be that $X_{t} \neq X_{s}$ in $L_{2}( \Omega )$.
%\end{Rm} %%%%%%%%%%%%%%%%%%%%%%%%%%%%%%%

Let $X = \{ X_{t} \}_{t \in \mathbf{R}}$ be a centered and stationary Gaussian process with $\mathrm{Var} ( X_{0} ) = 1$.
Then its {\it covariance function} $r : \mathbf{R} \to \mathbf{R}$ is the even function defined by
\begin{equation*}
r (t) = \mathrm{Cov} ( X_{t+h}, X_{h} ) = \mathbb{E} [ X_{t+h} X_{h} ] \quad \text{for $t \in \mathbf{R}$}
\end{equation*}
(that does not depend on $h \in \mathbf{R}$ because of the stationarity),
satisfying $r(t)\leqslant r(0)=1$,
% It is clear that $r(0) = 1$ and $r(t)\leqslant(\mathrm{Var} (X_{t})\mathrm{Var} (X_{0}))^{1/2}= 1$by Cauchy--Schwarz's inequality, 
with equality that holds only when $X_{0} = X_{t}$, {\it i.e.} for $t=0$.
Thus, {\it $t=0$ is a strict global maximum point of the function $r$}.

Recall that the covariance function $r$ is {\it positive definite}
(in the sense of that
for any $n \in \mathbf{N}$, $t_{1}, t_{2}, \ldots , t_{n} \in \mathbf{R}$ and $c_{1}, c_{2}, \ldots , c_{n} \in \mathbf{C}$,
$\displaystyle 
\sum_{j,k=1}^{n} c_{j} \overline{c_{k}}r( t_{j} - t_{k} )
= \mathbb{E}[\vert \sum_{j=1}^{n} c_{j} X_{t_{j}} \vert^{2}] \geqslant 0$.
Then, by Bochner's theorem, there exists a unique probability measure $F$ on $\mathbf{R}$,
called the {\it spectral distribution}, such that
\begin{equation*}
r(t)=\int_{-\infty}^{\infty}\mathrm{e}^{i\,\lambda t}\,\mathrm{d}F (\lambda),\quad t \in \mathbf{R}.
\end{equation*}
Suppose that $F$ is absolutely continuous and let $F^{\prime}$ denote its Radon--Nikod\'{y}m derivative $\mathrm{d}F(\lambda)/\mathrm{d} \lambda$.
% of $\mathrm{d}F(\lambda)$ with respect to $\mathrm{d} \lambda$.
Since $r(\cdot)$ is real, we have
$ F^{\prime} (\lambda)= F^{\prime} (-\lambda) $ for almost every $\lambda \in \mathbf{R}$.
Then, assuming that
$ ( F^{\prime} )^{1/2} \in L_{1} (\mathbf{R})$,
the continuous function $b \in L_{2}(\mathbf{R})$ defined by
\begin{equation}
\label{eq:b}
b(x)=\frac{1}{ 2\pi }\int_{-\infty}^{\infty}F^{\prime}(\lambda )^{1/2}\mathrm{e}^{i\,x\lambda}\,\mathrm{d}\lambda ,\quad x \in \mathbf{R},
\end{equation}
is real-valued, and $r(\cdot)$ has the following representation (see \cite[p.~149]{Berman-book}):
\begin{equation}\label{Corr-X}
r(t) = \int_{-\infty}^{\infty} b(t+s) b(s) \,\mathrm{d}s, \quad t \in \mathbf{R}.
\end{equation}
In this point of view, the law of the Gaussian process $X$ coincides
with that of $\{ W(b_{t}) \}_{t \in \mathbf{R}}$,
where $b_{t} (x) := b(t+x)$ for $t,x \in \mathbf{R}$ and
\begin{equation}
\label{def-W} 
X_t=W (b_t)=\int_{-\infty}^{\infty} b(t+x) \,\mathrm{d} w(x),
\end{equation}
$\{ w(t) \}_{t \in \mathbf{R}}$ denoting a one-dimensional Brownian motion
(see \cite[p.~157]{Berman-book}).

Let $ \mathcal{F}^{\mathrm{d}w} $ be the $\mathbb{P}$-completion of the $\sigma$-field
$\sigma \{ w(t)-w(s) : - \infty < s < t < +\infty \}$.
Let $L_{p}( \Omega )$ denote the space of real-valued, $\mathcal{F}^{ \mathrm{d}w }$-measurable and $p$-th order integrable (with respect to $\mathbb{P}$) random variables $Z$
with norm $\Vert Z \Vert_{p} = \mathbb{E} [ \vert Z \vert^{p} ]^{1/p}$.

We have an orthogonal decomposition
$\displaystyle  L_{2} ( \Omega ) = \bigoplus_{n=0}^{\infty} \mathcal{H}_{n}$,
where $\mathcal{H}_{0} = \mathbf{R}$ and, for $n \geqslant 1$, $\mathcal{H}_{n}$
is the closure in $\displaystyle L_{2}(\Omega , \mathcal{F}^{ \mathrm{d}w }, \mathbb{P})$ of the subspace spanned by $\{H_{n} (W(h)) : h(x) \in L_2 ( \mathbf{R}, \mathrm{d}x )\}$,
and $H_{n}$ denotes the $n$-th Hermite polynomial defined as 
$$
\mathrm{e}^{ tx- \frac{t^2}{2} } = \sum_{n=0}^{\infty} H_n (x) \frac{t^n}{n!} \qquad \mbox{or} \qquad
H_n(x) = (-1)^{n} \mathrm{e}^{x^2/2} \frac{ \mathrm{d}^{n} }{ \mathrm{d} x^{n} } ( \mathrm{e}^{-x^2/2}).
$$ 
We can make use as well of the multiple Wiener–It\^o integral $I_n$ defined as in Major~\cite{major}, since we have
$$
H_{n} (W(h))=I_{n} ( h^{\otimes n} )\qquad
\text{for $h \in L_{2} (\mathbf{R})$ such that $\Vert h \Vert_{L_{2}(\mathbf{R})} = 1$.}
$$
The orthogonal projection $L_{2} (\Omega ) \to \mathcal{H}_{n}$ will be denoted by $J_{n}$.

For each $\alpha \in \mathbf{R}$ and $p \in (1,\infty )$, a Sobolev-type space $\mathbb{D}_{p}^{\alpha}$
is defined as the completion of
$\displaystyle \mathcal{P}:=\cup_{n=1}^{\infty} \cap_{m \geqslant n}\{f \in L_{2}(\Omega):J_{m} f = 0\}$
under the norm $\Vert \cdot  \Vert_{p,\alpha}$ defined by
$\displaystyle \Vert f \Vert_{p, \alpha} = \Vert (I-\mathcal{L})^{\alpha /2} f \Vert_{p} $
for $f \in \mathcal{P}$,
where $\mathcal{L}$ is the Ornstein-Uhlenbeck operator given by
\begin{equation*}
(I-\mathcal{L})^{\alpha /2}f = \sum_{k=0}^{\infty} (1+k)^{\alpha /2} J_{k}f, \quad f \in \mathcal{P}.
\end{equation*}
Note that $\mathbb{D}_{p}^{0} = L_{p}(\Omega )$
for $p \in (1, \infty)$, and
\begin{equation*} 
\Vert f \Vert_{2, \alpha}^{2} = \sum_{k=0}^{\infty} (1+k)^{\alpha}\Vert J_{k}f \Vert_{2}^{2}, \quad f \in \mathbb{D}_{2}^{\alpha}.
\end{equation*}
We further define
$\mathbb{D}_{p}^{\alpha -} := \cap_{\beta < \alpha} \mathbb{D}_{p}^{\beta}$ for $p \in (1, \infty )$
and $\alpha \in \mathbf{R}$,
\begin{equation*}
\mathbb{D}^{\infty} := \bigcap_{\alpha >0} \bigcap_{1 < p < \infty} \mathbb{D}_{p}^{\alpha} 
\quad \text{and} \quad 
\mathbb{D}^{-\infty} := \bigcup_{\alpha <0} \bigcup_{1 < p < \infty} \mathbb{D}_{p}^{\alpha}.
\end{equation*}
It is known that $\displaystyle (\mathbb{D}_{p}^{\alpha})^{\prime} = \mathbb{D}_{q}^{-\alpha}$
if and only if $1/p + 1/q = 1$ (where ``$\prime$" stands for the ``continuous dual")
for each $\alpha \in \mathbf{R}$, the space $\mathbb{D}^{\infty}$ is a complete countably-normed space
and $\mathbb{D}^{-\infty}$ is its dual named the space of {\it generalized Wiener functionals}.

%%%%%%%%%%%%%%%%%%%%%%%%%%%%%%%%%%%%%%%%
\subsection{Conditions and properties}
\label{ssec:Cdtion} %@@@@@@@@@@@@@@@@@@@@ LABEL: ssec:Cdtion
%%%%%%%%%%%%%%%%%%%%%%%%%%%%%%%%%%%%%%%%

In this section, we introduce several conditions under which we will be working in this study.

Let $b \in L^2(\mathbf{R})$ satisfying \eqref{Corr-X}, 
so that the differentiability of $r$ can be inferred from that of $b$.
Consider the following set of conditions on $b$ to ensure the twice differentiability of $r$
(see Proposition~\ref{r:diff'ble}).

%\begin{Def} %%%%%%%%%%%%%%%%%%%%%%%%%%%
%\label{Def:(A1)} %@@@@@@@@@@@@@@@@@@@@@@ LABEL: Def:(A1)
\begin{itemize}
    \item {\bf Condition (A1)}: 
    \begin{itemize}
        \item[(i)]
        $b$ is differentiable almost everywhere,
        \item[(ii)]
        $\displaystyle b, b^{\prime}\in L_{1} (\mathbf{R})\cap L_{2} (\mathbf{R})\cap L_{\infty} (\mathbf{R})$, 
        \item[(iii)]
        $\Vert b \Vert_{L_{2}(\mathbf{R})} > 0$.
    \end{itemize}
\end{itemize}    
%To discuss a more detailed material in a regularization, we introduce the following condition.
To further elaborate on the analysis in terms of regularization, we also introduce the following condition:
\begin{itemize}
    \item {\bf Condition (A2)}:
    \begin{itemize}
        \item[(1)]
        the function $r$ defined by \eqref{Corr-X}, is fourth differentiable on $\mathbf{R}$ on an open neighborhood of $0$,
        \item[(2)] 
        $\displaystyle r^{(iv)} (0) - ( r^{\prime\prime} (0) )^{2} > 0$, where $r^{(iv)}$ denotes the fourth derivative of $r$.
        \end{itemize} 
        \item[]
\end{itemize}

Under Condition (A1), we have the following well-known properties
(see e.g. \cite{Berman-book} or \cite[p.~123, Section~7.2 and p.~177, Lemma~3--(ii)]{CrLe-book}): 
\begin{Prop} %%%%%%%%%%%%%%%%%%%%
\label{r:diff'ble} %@@@@@@@@@@@@@@@@@@@@ LABEL: r:diff'ble
Assume that $b$ satisfies (A1). Then, $r$ is twice differentiable and
for each $t \in \mathbf{R}$, we have
\begin{itemize}
    \item[{\rm (i)}] $r(t) = r(-t)$,
    \item[{\rm (ii)}] $r^{\prime}(t) = - r^{\prime}(-t)$, in particular, $r^{\prime} (0) = 0$,
    \item[{\rm (iii)}] $0 < - r^{\prime\prime}(0) < +\infty$.
\end{itemize}
\end{Prop}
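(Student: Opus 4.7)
The plan is to derive all three statements directly from the convolution representation
\[
r(t) = \int_{-\infty}^{\infty} b(t+s)\,b(s)\,\mathrm{d}s,
\]
exploiting the integrability and boundedness assumptions of (A1) to justify the manipulations. Statements (i) and (ii) follow from elementary change of variables plus one differentiation under the integral. For (iii) I want to obtain the clean identity
\[
-r^{\prime\prime}(0) = \int_{-\infty}^{\infty} b^{\prime}(s)^{2}\,\mathrm{d}s = \Vert b^{\prime}\Vert_{L_2(\mathbf{R})}^{2},
\]
from which finiteness and positivity can be read off separately.

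For (i), I would substitute $u = s+t$ in the defining integral to get $r(t) = \int b(u)b(u-t)\,\mathrm{d}u$; replacing $t$ by $-t$ and renaming recovers $r(-t) = r(t)$. For (ii), I would pass from the difference quotient $\bigl(b(t+h+s) - b(t+s)\bigr)/h$, which tends a.e. to $b^{\prime}(t+s)$ under (A1)(i), to the limit under the integral sign. The dominating function needed for Lebesgue's theorem is produced by writing the quotient as an average of $b^{\prime}$ over an interval (the absolute continuity required to do this is the technical point one must verify from $b^{\prime} \in L_{\infty}$), yielding the uniform bound $\Vert b^{\prime}\Vert_{L_\infty}|b(s)|$; since $b \in L_1$, this dominates. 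Hence $r^{\prime}(t) = \int b^{\prime}(t+s)b(s)\,\mathrm{d}s$, and the oddness in (ii), together with $r^{\prime}(0)=0$, follows from the evenness established in (i).

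For (iii), I would first rewrite $r^{\prime}(t)$ by the same substitution $u = s+t$, giving $r^{\prime}(t) = -\int b(s+t)b^{\prime}(s)\,\mathrm{d}s$. A second application of differentiation under the integral, this time using $b \in L_\infty$ together with $b^{\prime} \in L_1$ as the integrable envelope, yields $r^{\prime\prime}(t) = -\int b^{\prime}(s+t)b^{\prime}(s)\,\mathrm{d}s$, and evaluation at $t=0$ produces the target formula $-r^{\prime\prime}(0) = \Vert b^{\prime}\Vert_{L_2}^{2}$. Finiteness is immediate from $b^{\prime} \in L_2$. The main obstacle is strict positivity: one must exclude $b^{\prime} \equiv 0$ a.e. If that held, the identity above would force $r^{\prime\prime} \equiv 0$ on $\mathbf{R}$, so $r$ would be affine; combined with $r(0)=1$, evenness, and $|r| \leqslant 1$, this would force $r \equiv 1$, contradicting the strict maximum property of $r$ at $0$ noted before the proposition (equivalently, the injectivity of $t \mapsto X_t$ in $L_2(\Omega)$). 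This closes the argument and simultaneously confirms that the twice differentiability of $r$ is an honest consequence of the regularity of $b$ encoded in (A1).
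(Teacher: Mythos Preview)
The paper does not supply its own proof of this proposition; it simply cites Berman and Cram\'er--Leadbetter for these standard facts. Your direct argument via the convolution representation is therefore more than the paper itself provides, and the overall strategy---differentiate under the integral using the $L_1/L_\infty$ pairings from (A1), then read off $-r''(0)=\Vert b'\Vert_{L_2}^2$---is the right one.

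Two minor points deserve tightening. First, the absolute continuity of $b$ that you flag cannot actually be extracted from a.e.\ differentiability together with $b'\in L_\infty$: a Cantor-type construction yields a compactly supported continuous $b$ with $b'=0$ a.e., satisfying (A1) literally, yet not absolutely continuous. The paper itself invokes the identity $(b(x+h)-b(x))/h = h^{-1}\int_0^h b'(x+s)\,\mathrm{d}s$ without comment in its proof of Proposition~\ref{Stoc-Mdf}, so it is tacitly assuming more than (A1) states; you are on equal footing with the authors, but should not claim the step is \emph{derivable} from (A1) as written. Second, your positivity argument for (iii) routes through the strict-maximum property of $r$ at $0$, which the paper establishes from the Gaussian-process structure rather than from (A1). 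A shorter, self-contained alternative: if $b'=0$ a.e.\ and $b$ is absolutely continuous, then $b$ is constant, and $b\in L_1(\mathbf{R})$ then forces $b\equiv 0$, contradicting (A1)(iii) directly.
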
 %%%%%%%%%%%%%%%%%%%%%%

%Let $w = \{ w(x) \}_{x \in \mathbf{R}}$ be a one-dimensional Wiener process.
Let us consider a real, continuous and stationary Gaussian process $X = \{ X_{t} \}_{t \in \mathbf{R}}$ defined by \eqref{def-W}. Then, the function $r$ defined by (\ref{Corr-X}) is the covariance function of $X$, $r(t) = \mathrm{Cov} (X_{0}, X_{t})$, and we have the following property. 
\begin{Prop} %%%%%%%%%%%%%%%%%%%%
\label{Stoc-Mdf} %@@@@@@@@@@@@@@@@@@@@@@ LABEL: Stoc-Mdf
Under condition(A1), for almost every $t \in \mathbf{R}$,
$$
\mathbb{P}\big(\text{$X$ is differentiable at $t$}\big)= 1
$$
and
\begin{equation*}
\mathbb{P} \Big(\dot{X}_{t}	= \int_{-\infty}^{\infty} b^{\prime} ( t+x )\,\mathrm{d} w(x)\Big)= 1.
\end{equation*}
\end{Prop}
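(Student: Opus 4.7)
The strategy is to identify a natural candidate for $\dot{X}_{t}$, establish the identity $X_{t} - X_{0} = \int_{0}^{t} \dot{X}_{s}\,\mathrm{d}s$ as a pathwise statement, and then invoke Lebesgue's differentiation theorem together with Fubini. Define
\[
Y_{t} := W(b^{\prime}_{t}) = \int_{-\infty}^{\infty} b^{\prime}(t+x)\,\mathrm{d}w(x),\quad t \in \mathbf{R},
\]
which makes sense as a Wiener integral since $b^{\prime}_{t} \in L_{2}(\mathbf{R})$ by (A1)(ii). By stationarity, $\mathbb{E}|Y_{s}| \leqslant \Vert b^{\prime}\Vert_{L_{2}(\mathbf{R})}$ uniformly in $s$, so Fubini gives $\mathbb{E}\int_{0}^{T}|Y_{s}|\,\mathrm{d}s < \infty$ for every $T$.

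From (A1) one first verifies that $b$ is locally absolutely continuous with $b(t+x) - b(x) = \int_{0}^{t} b^{\prime}(s+x)\,\mathrm{d}s$, either directly from $b,b^{\prime} \in L_{1}(\mathbf{R})$ combined with pointwise a.e.\ differentiability, or through the Fourier representation \eqref{eq:b}. A stochastic Fubini argument applied to $X_{t} - X_{0} = W(b_{t} - b_{0})$ then yields, for each fixed $t$,
\[
X_{t} - X_{0} = \int_{-\infty}^{\infty}\!\left(\int_{0}^{t} b^{\prime}(s+x)\,\mathrm{d}s\right)\mathrm{d}w(x) = \int_{0}^{t} Y_{s}\,\mathrm{d}s \quad \text{a.s.},
\]
where the exchange is legitimate since $\int_{0}^{t}\Vert b^{\prime}_{s}\Vert_{L_{2}(\mathbf{R})}\,\mathrm{d}s = |t|\,\Vert b^{\prime}\Vert_{L_{2}(\mathbf{R})} < \infty$.

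To upgrade this to a pathwise identity I pass to a continuous modification of $X$, whose existence follows from Kolmogorov's criterion applied to the Gaussian increment bound $\mathbb{E}[|X_{t+h} - X_{t}|^{2p}] \leqslant C_{p}|h|^{2p}$ for all $p$. On the other hand, $s \mapsto Y_{s}(\omega)$ is locally integrable for $\mathbb{P}$-almost every $\omega$, so $t \mapsto \int_{0}^{t} Y_{s}\,\mathrm{d}s$ is (absolutely) continuous. With both sides of the identity continuous in $t$, the equality holds for all $t \in \mathbf{R}$ simultaneously, a.s.

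Finally, Lebesgue's differentiation theorem tells us that for almost every $\omega$ the Lebesgue points of $s \mapsto Y_{s}(\omega)$ form a set of full Lebesgue measure, and at each such point $t$ one has $h^{-1}\int_{t}^{t+h} Y_{s}(\omega)\,\mathrm{d}s \to Y_{t}(\omega)$, hence $X(\omega)$ is differentiable at $t$ with $\dot{X}_{t}(\omega) = Y_{t}(\omega)$. The set $\{(t,\omega): X$ is not differentiable at $t$, or $\dot{X}_{t}(\omega) \neq Y_{t}(\omega)\}$ thus has $(\mathrm{d}t \otimes \mathrm{d}\mathbb{P})$-measure zero, and a second application of Fubini delivers the claim. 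The main technical hurdle is justifying the absolute continuity of $b$ and the stochastic Fubini exchange under the bare hypothesis (A1); modulo those two points, the remaining steps are routine.
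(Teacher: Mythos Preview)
Your argument is correct and takes a genuinely different route from the paper's. The paper proceeds in two separate steps: first it invokes It\^{o}'s theorem \cite{It63/64} to obtain that $X$ has absolutely continuous paths almost surely, and then deduces the ``almost every $t$'' differentiability by the same Fubini swap you use at the end; second, for the identification of $\dot{X}_{t}$, it bounds $\mathbb{E}[(\dot{X}_{t}-W(b'_{t}))^{2}]$ via Fatou's lemma, the It\^{o} isometry, and a dominated-convergence argument showing $\Vert h^{-1}(b_{h}-b)-b'\Vert_{L_{2}}\to 0$. Your approach instead builds the pathwise identity $X_{t}-X_{0}=\int_{0}^{t}Y_{s}\,\mathrm{d}s$ directly via stochastic Fubini, so that both the absolute continuity of paths and the identification of the derivative fall out simultaneously from Lebesgue's differentiation theorem. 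This is more self-contained (no external citation needed) and conceptually cleaner; the paper's route, on the other hand, outsources the hard analytic step and keeps the remaining computation elementary.

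One remark on the technical hurdle you flag: the absolute continuity of $b$ is indeed not a formal consequence of (A1) as stated (a.e.\ differentiability with $b'\in L_{1}$ does not preclude a singular part). The paper's proof in fact uses the same fact implicitly, writing $\frac{b(x+h)-b(x)}{h}=\frac{1}{h}\int_{0}^{h}b'(x+s)\,\mathrm{d}s$ inside its Jensen step. So this is a shared gap, not one specific to your argument; in the paper's setting it is harmless because $b$ is given by the Fourier integral \eqref{eq:b}, and the Fourier route you mention is the natural way to close it.
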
 %%%%%%%%%%%%%%%%%%%%%%

Finally, observe that Condition (A2) immediately directly implies the Geman condition, denoted by {\rm (G)}, which we recall here for completeness (see \cite{geman}).
\begin{Def}[{\bf Geman condition}] %%%%%%%%%%
\label{Def:Geman} %@@@@@@@@@@@@@@@@@@@@@ LABEL: Def:Geman
We say that a stationary Gaussian process $\{ X_{t} \}_{t\geqslant 0}$ with a twice differentiable covariance function $r$ satisfies the {\it Geman condition} {\rm (G)} if there exists $\delta > 0$ such that
\begin{equation*}
\frac{r^{\prime\prime}(t) - r^{\prime\prime}(0)}{t}
=: L(t) \in L_{1} ( ( 0, \delta ], \mathrm{d}t ),
\quad \text{i.e.}, \quad
\int_{0+}^{\delta} \vert L(t) \vert \,\mathrm{d}t < +\infty .
\end{equation*}
\end{Def} %%%%%%%%%%%%%%%%%%%%%%%%%%%%%

%%%%%%%%%%%%%%%%%%%%%%%%%%%%%%%%%%%%%%%%
\section{Main Results}
\label{Sec:Results} %@@@@@@@@@@@@@@@@@@@ LABEL: Sec:Results
%%%%%%%%%%%%%%%%%%%%%%%%%%%%%%%%%%%%%%%%

Let $b : \mathbf{R} \to \mathbf{R}$ be square-integrable and let $r$ be the function defined by \eqref{Corr-X}.

From now on, we consider the modification
$\displaystyle \{ W(b^{\prime}_{t}) \}_{t \in \mathbf{R}}$ of $\dot{X}_{t}$, which will still be denoted by
$\displaystyle \dot{X} = \{ \dot{X}_{t} \}_{t \in \mathbf{R}}$.

Let $t \in \mathbf{R}$ be arbitrary.
By
definition, $(X_{t}, \dot{X}_{t})_{t \geqslant 0}$ is a two-dimensional stationary Gaussian process, with independent components (for fixed $t$) since
%(moreover $X_{t}$ and $\dot{X}_{t}$ are independent as
$\displaystyle
\mathbb{E} [ X_{t} \dot{X}_{t} ]
= r^{\prime}(0) = 0
$.
So, the pull-back $\displaystyle \Lambda (X_{t},\dot{X}_{t})$ of $\displaystyle \Lambda \in \mathscr{S}^{\prime} (\mathbf{R}^{2})$ is well defined.
By stationarity, we can write:
$$
\Vert J_{n} [ \Lambda (X_{t}, \dot{X}_{t}) ] \Vert_{L_{2}} =\Vert J_{n} [ \Lambda (X_{0}, \dot{X}_{0})] \Vert_{L_{2}},\quad \forall n  \in \mathbf{Z}^+.
$$
This implies that if 
$\displaystyle \Lambda (X_{0}, \dot{X}_{0}) \in \mathbb{D}_{2}^{\alpha}$,
then, we have
$\displaystyle
\Vert \Lambda (X_{t}, \dot{X}_{t}) \Vert_{2,\alpha} = \Vert \Lambda (X_{0}, \dot{X}_{0})\Vert_{2,\alpha}
$.
Thus,
$\displaystyle
\Lambda (X_{t}, \dot{X}_{t}) \in \mathbb{D}_{2}^{\alpha}
$
and
$\displaystyle
t \in \mathbf{R}  \mapsto \Lambda ( X_{t}, \dot{X}_{t} ) \in \mathbb{D}_{2}^{\alpha}
$
is Bochner integrable on each compact interval.
Hence, we can define the Bochner integral
$$
\int_{0}^{1}
\Lambda ( X_{t}, \dot{X}_{t} )
\,\mathrm{d}t \in \mathbb{D}_{2}^{\alpha}.
$$
In the same way, the Bochner integral
$\displaystyle
\int_{0}^{1} \Lambda ( X_{t} ) \,\mathrm{d}t
$
is defined for $\Lambda \in \mathscr{S}^{\prime} (\mathbf{R})$.

Let us state our main result, of which proof is given in Section~\ref{Sec:Proof}.
\begin{Thm} %%%%%%%%%%%%%%%%%%%%%%%%%%%
\label{Thm:raise} %@@@@@@@@@@@@@@@@@@@@ LABEL: Thm:raise
Let $\alpha \in \mathbf{R}$ be arbitrary.
\begin{itemize}
\item[{\rm (1)}]
Under (A1), for any $\Lambda \in \mathscr{S}^{\prime} (\mathbf{R})$
with $\Lambda ( X_{0} ) \in \mathbb{D}_{2}^{\alpha}$, we have
% there exists a constant $c=c(r) > 0$ such that 
$$
\Vert \int_{0}^{1} \Lambda ( X_{t} ) \,\mathrm{d}t \Vert_{2,\alpha + \frac{1}{2}} \leqslant
c\,\Vert \Lambda ( X_{0} )\Vert_{2,\alpha}
$$
where $c=c(r)$ is a positive constant.
\item[{\rm (2)}]
Under (A2), for any $\Lambda \in \mathscr{S}^{\prime} (\mathbf{R}^{2})$
with $\Lambda ( X_{0}, \dot{X}_{0} ) \in \mathbb{D}_{2}^{\alpha}$, we have
$$
\Vert \int_{0}^{1} \Lambda ( X_{t}, \dot{X}_{t} ) \,\mathrm{d}t \Vert_{2,\alpha + \frac{1}{2}} \leqslant c \,\Vert \Lambda ( X_{0}, \dot{X}_{0} ) \Vert_{2,\alpha}
$$
where $c=c(r)$ is a positive constant.
\end{itemize}
\end{Thm}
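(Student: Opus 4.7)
The plan is to use the chaos expansion induced by the moving frame $\{b_t\}$ (resp.\ $\{b_t,b_t'\}$), commute the Bochner integral with each chaos projection $J_n$, and reduce the whole statement to a Laplace-type estimate showing that a kernel modelled on $\int_0^1\!\int_0^1 r(t-s)^n\,ds\,dt$ decays like $(n+1)^{-1/2}$. Once such a bound is available, the gain of half a derivative in the $\mathbb{D}_2$-scale falls out by comparing term by term the weighted $\ell_2$-series that define $\Vert\cdot\Vert_{2,\alpha+1/2}^2$ and $\Vert\cdot\Vert_{2,\alpha}^2$.

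For Part~(1), the normalisation $\Vert b_t\Vert_{L_{2}(\mathbf{R})}^{2}=r(0)=1$ makes $X_t=W(b_t)$ a unit Gaussian for every $t$, and the Hermite expansion reads $\Lambda(X_0)=\sum_n c_n H_n(W(b_0))=\sum_n c_n I_n(b_0^{\otimes n})$ with $\Vert\Lambda(X_0)\Vert_{2,\alpha}^{2}=\sum_n(1+n)^{\alpha} n!\,c_n^{2}$; stationarity transports the coefficients $c_n$ unchanged to every $t$, so
\[
J_n\!\Bigl[\int_0^1\!\Lambda(X_t)\,dt\Bigr]= c_n\,I_n\!\Bigl(\int_0^1 b_t^{\otimes n}\,dt\Bigr),
\]
and the isometry $\Vert I_n(f)\Vert_{2}^{2}=n!\,\Vert f\Vert_{L_{2}(\mathbf{R}^n)}^{2}$ combined with $\langle b_s^{\otimes n},b_t^{\otimes n}\rangle=r(t-s)^{n}$ reduces Part~(1) to the scalar estimate
\[
\int_0^1\!\int_0^1 r(t-s)^{n}\,ds\,dt\ \leqslant\ \frac{c(r)^{2}}{\sqrt{1+n}},\qquad n\geqslant 0.
\]
Since Proposition~\ref{r:diff'ble} gives $r''(0)<0$ and identifies $t=0$ as a strict global maximum of $r$, one picks $\delta>0$ and $a>0$ with $r(u)\leqslant 1-au^{2}$ on $[-\delta,\delta]$ and $\rho:=\sup_{|u|\geqslant\delta}r(u)<1$; splitting the integral at $|t-s|=\delta$, the far region contributes $O(\rho^{n})$ while on the near region $(1-au^{2})^{n}\leqslant e^{-nau^{2}}$ integrates to $O(n^{-1/2})$ after the substitution $u=v/\sqrt n$. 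The Gauss hypergeometric identity hinted at in the introduction enters if one wants the sharp constant, via the closed-form evaluation of $\int_0^1(1-u^{2})^{n}\,du$ as a ratio of $\Gamma$-functions; for the general bound, however, only the non-degenerate quadratic behaviour of $r$ at $0$ is required.

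For Part~(2) the same strategy applies in two dimensions. Proposition~\ref{r:diff'ble}(ii) gives $\langle b_t,b_t'\rangle_{L_{2}(\mathbf{R})}=-r'(0)=0$, so $e_1(t):=b_t$ and $e_2(t):=b_t'/\sqrt{-r''(0)}$ form an orthonormal pair for every $t$; the double Hermite expansion $\Lambda(X_0,\dot X_0)=\sum_{m,k}\hat c_{m,k}H_m(W(e_1(0)))H_k(W(e_2(0)))$ then translates, using the identity $H_m(W(e_1))H_k(W(e_2))=I_{m+k}(e_1^{\otimes m}\mathbin{\tilde\otimes}e_2^{\otimes k})$ valid for orthonormal $e_1,e_2$, into $J_n[\Lambda(X_t,\dot X_t)]=I_n(g_n(t))$ with $g_n(t):=\sum_{m+k=n}\hat c_{m,k}\,e_1(t)^{\otimes m}\mathbin{\tilde\otimes}e_2(t)^{\otimes k}$. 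Expanding the symmetric tensor inner product, $\langle g_n(s),g_n(t)\rangle$ becomes a combinatorial polynomial in the three scalar correlations $\rho_1(\tau)=r(\tau)$, $\rho_2(\tau)=-r''(\tau)/|r''(0)|$, $\eta(\tau)=r'(\tau)/\sqrt{-r''(0)}$ (with $\tau:=t-s$) that can be recognised as a matrix element of the $n$-fold tensor power of the $2\times 2$ cross-covariance matrix $M(\tau)$ between the standardised vectors $(X_s,\dot X_s/\sqrt{-r''(0)})$ and $(X_t,\dot X_t/\sqrt{-r''(0)})$. A direct Taylor expansion yields $M(\tau)^{\top}M(\tau)=I-\tau^{2}Q+O(\tau^{3})$, where the quadratic form $Q$ has determinant proportional to $r^{(iv)}(0)-(r''(0))^{2}$; condition (A2)(2) is exactly what makes $Q$ positive definite, so the operator norm of $M(\tau)^{\otimes n}$ is bounded by $(1-c\tau^{2})^{n/2}$ near $0$, and the Laplace argument of Part~(1) applies verbatim.

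The main obstacle I anticipate is precisely this positivity step: one must check that the odd-order cross-correlation $\eta(\tau)$, which prevents $M(\tau)$ from being diagonal, does not produce a destructive linear term in the expansion, and that the quadratic form $Q$ is strictly positive definite — this is the role of the non-degeneracy hypothesis (A2)(2), and is exactly what prevents (A1) alone from being sufficient in the two-dimensional case. Once this is settled, the final summation $\sum_n(1+n)^{\alpha+1/2}\Vert\int_0^1 I_n(g_n(t))\,dt\Vert_{2}^{2}$ collapses to $c(r)^{2}\Vert\Lambda(X_0,\dot X_0)\Vert_{2,\alpha}^{2}$, giving Part~(2).
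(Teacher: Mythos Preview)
Your plan for Part~(1) is essentially the paper's: chaos expansion with stationary Hermite coefficients, reduction to $\int_{0}^{1}\!\int_{0}^{1}r(t-s)^{n}\,ds\,dt$, and a Laplace-type estimate driven by the quadratic maximum of $r$ at $0$. The only differences are cosmetic --- the paper partitions $[0,1]$ into subintervals of length $<c'$ (so that $r(u)\leqslant 1-cu^{2}$ holds throughout) and evaluates $\int_{0}^{c'}(1-cu^{2})^{n}\,du$ via the Gauss hypergeometric theorem (their Lemma~\ref{Lem:iter-asym}), whereas you use a near/far split and the cruder bound $(1-au^{2})^{n}\leqslant e^{-nau^{2}}$. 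Both work; yours is more elementary.

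Part~(2), however, contains a genuine gap. Your claim that $M(\tau)^{\top}M(\tau)=I-\tau^{2}Q+O(\tau^{3})$ with $Q$ \emph{positive definite} is false: a direct computation of the $(1,1)$ entry gives
\[
\bigl(M^{\top}M\bigr)_{11}(\tau)=r(\tau)^{2}+\frac{r'(\tau)^{2}}{-r''(0)}
=\Bigl(1-\tfrac{-r''(0)}{2}\tau^{2}\Bigr)^{2}+\frac{(-r''(0))^{2}\tau^{2}}{-r''(0)}+O(\tau^{4})
=1+O(\tau^{4}),
\]
so $Q$ has a zero eigenvalue and $\det Q=0$ (not proportional to $r^{(iv)}(0)-(r''(0))^{2}$, as you assert). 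Geometrically, to order $\tau$ the matrix $M(\tau)$ is a rotation, which keeps the largest singular value equal to $1$; only the \emph{smaller} singular value picks up the quadratic defect. Consequently $\Vert M(\tau)\Vert_{\mathrm{op}}=1-O(\tau^{4})$, and $\Vert M(\tau)^{\otimes n}\Vert_{\mathrm{op}}=\Vert M(\tau)\Vert_{\mathrm{op}}^{n}$ feeds into the Laplace integral to give only $O(n^{-1/4})$, i.e.\ a regularization by $1/4$ rather than $1/2$.

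The paper avoids this by controlling the bilinear form through the Hilbert--Schmidt norm rather than the operator norm. Its Lemma~\ref{Prop:quad_est} is based on the fact that
\[
\frac{d^{2}}{d\tau^{2}}\Big\vert_{\tau=0}\Vert A(\tau)\Vert_{\mathrm{HS}}^{2}
=\frac{d^{2}}{d\tau^{2}}\Big\vert_{\tau=0}\Bigl(r^{2}+\tfrac{2(r')^{2}}{-r''(0)}+\tfrac{(r'')^{2}}{(r''(0))^{2}}\Bigr)
=\frac{2}{r''(0)}\bigl(r^{(iv)}(0)-(r''(0))^{2}\bigr)<0
\]
under~(A2)(2); it is the \emph{sum} of the two singular values squared that carries the quadratic decay, not the top one alone, and the multiplicativity $\Vert A^{\otimes n}\Vert_{\mathrm{HS}}=\Vert A\Vert_{\mathrm{HS}}^{n}$ is then exploited. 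Replacing your operator-norm step by this Hilbert--Schmidt argument (and tracking the corresponding normalisation in the coefficient vector) is what is needed to recover the full $1/2$ gain.
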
 %%%%%%%%%%%%%%%%%%%%%%%%%%%%%

Theorem~\ref{Thm:raise} can be interpreted as:
`{\it For smooth Gaussian processes, the Bochner integral $\int_{0}^{1}\mathrm{d}t$ regularizes $\Lambda ( X_{t}, \dot{X}_{t} )$ by at least $1/2$},' which can be compared with the result \cite[Corollary~1.6]{amaba:ryu} mentioned in Section~\ref{Intro}.

\vspace{1ex}
We now turn to the application of level-crossing functionals, for which we introduce the notion of {\it Bessel potential spaces}, namely:
$$  
H_{p}^{\alpha}(\mathbf{R}):= ( 1 - \triangle )^{-\alpha /2} L_{p} ( \mathbf{R} ) \quad \text{for}\quad
p \in (1, \infty ) \quad\text{and}\quad \alpha \in \mathbf{R}.
$$
We then obtain the following regularization result, which is derived in Section~\ref{Sec:Proof}:
\begin{Cor} %%%%%%%%%%%%%%%%%%%%%%
\label{Cor:Ocup_Sob} %@@@@@@@@@@@@@@@@@@@@@@@ LABEL: Cor:Ocup_Sob
Under Conditions (A1) and (A2), we have: 
    \begin{itemize}
        \item[(i)] For $f \in H_{p}^{\alpha} (\mathbf{R})$ with $\alpha \in (\frac{1}{2}, 1]$ and $p \in (1, \infty )$,
            \begin{equation*}
            \int_{0}^{1} \delta_{x} ( X_{t} ) f(\dot{X}_{t}) \,\mathrm{d} t \in \mathbb{D}_{2}^{ 0- }.
            \end{equation*}
        \item[(ii)] In particular, the number of crossings of any level $x \in \mathbf{R}$ by the process $X$ on $[0,1]$ satisfies 
        $$
        N(x):=\int_{0}^{1}\delta_{x} ( X_{t} )\vert \dot{X}_{t} \vert\,\mathrm{d}t
        \in \mathbb{D}_{2}^{0-}.
        $$
    \end{itemize}
\end{Cor}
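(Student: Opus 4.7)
The plan is to derive both parts from Theorem~\ref{Thm:raise}(2), applied to the tempered distributions $\Lambda(u,v)=\delta_{x}(u)f(v)\in\mathscr{S}^{\prime}(\mathbf{R}^{2})$ for (i) and $\Lambda(u,v)=\delta_{x}(u)|v|\in\mathscr{S}^{\prime}(\mathbf{R}^{2})$ for (ii). Since Theorem~\ref{Thm:raise}(2) lifts Malliavin--Sobolev regularity by $1/2$, the whole task collapses to verifying that $\Lambda(X_{0},\dot{X}_{0})\in\mathbb{D}_{2}^{\gamma}$ for every $\gamma<-1/2$; the conclusion $\mathbb{D}_{2}^{0-}$ is then immediate in both cases.

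The structural key is the independence of $X_{0}$ and $\dot{X}_{0}$, which follows from $r^{\prime}(0)=0$ (Proposition~\ref{r:diff'ble}(ii)) and is equivalent to the orthogonality of $b$ and $b^{\prime}$ in $L_{2}(\mathbf{R})$ via $r^{\prime}(0)=\langle b^{\prime},b\rangle_{L_{2}}$. Setting $\sigma=\sqrt{-r^{\prime\prime}(0)}$ and $\tilde b=b^{\prime}/\sigma$, the product formula for multiple Wiener--It\^o integrals has every non-trivial contraction killed by $\langle b,\tilde b\rangle_{L_{2}}=0$, so each product $H_{n}(X_{0})H_{m}(\dot{X}_{0}/\sigma)$ sits inside the total chaos $\mathcal{H}_{n+m}$ with $L_{2}$-norm squared equal to $n!\,m!$. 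Expanding $\delta_{x}(X_{0})=\phi(x)\sum_{n}\frac{H_{n}(x)}{n!}H_{n}(X_{0})$ (with $\phi$ the standard Gaussian density) and $f(\dot{X}_{0})=\sum_{m}a_{m}H_{m}(\dot{X}_{0}/\sigma)$, and using the elementary bound $(1+n+m)^{\gamma}\leqslant(1+n)^{\gamma}$ valid for $\gamma\leqslant 0$ and $m\geqslant 0$, yields the factorization
\[
\Vert\delta_{x}(X_{0})f(\dot{X}_{0})\Vert_{2,\gamma}^{2}\leqslant\Big(\phi(x)^{2}\sum_{n\geqslant 0}(1+n)^{\gamma}\frac{H_{n}(x)^{2}}{n!}\Big)\cdot\Vert f(\dot{X}_{0})\Vert_{L_{2}(\Omega)}^{2}.
\]

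Each factor is then controlled separately. For the first, Mehler's identity $\sum_{n}H_{n}(x)^{2}t^{n}/n!=(1-t^{2})^{-1/2}\exp(x^{2}t/(1+t))$ combined with a Karamata-Tauberian extraction gives $H_{n}(x)^{2}/n!\sim c(x)n^{-1/2}$, so the series converges precisely when $\gamma<-1/2$. For the second, the Sobolev embedding on $\mathbf{R}$ places $H_{p}^{\alpha}(\mathbf{R})$ with $\alpha\in(1/2,1]$ and $p\in(1,\infty)$ inside some $L_{q}(\mathbf{R})$ with $q\geqslant 2$ (indeed into $C_{b}$ as soon as $\alpha>1/p$), which combined with the Gaussian tail of the law of $\dot{X}_{0}$ yields $f(\dot{X}_{0})\in L_{2}(\Omega)$; for part (ii), $|\dot{X}_{0}|\in L_{2}(\Omega)$ is trivial. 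Putting everything together gives $\Lambda(X_{0},\dot{X}_{0})\in\mathbb{D}_{2}^{\gamma}$ for every $\gamma<-1/2$, and Theorem~\ref{Thm:raise}(2) concludes. The main technical hurdle is the second paragraph: the tensor-product identification of products of independent sub-chaoses as elements of a single combined chaos, together with the resulting factorized Sobolev bound that turns a two-variable estimate into a product of one-variable ones; the Mehler-Tauberian pinpointing of the sharp threshold $-1/2$ is then a routine consequence.
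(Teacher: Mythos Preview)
Your argument is correct and reaches the same intermediate conclusion $\delta_{x}(X_{0})f(\dot X_{0})\in\mathbb{D}_{2}^{(-1/2)-}$ as the paper, after which both invoke Theorem~\ref{Thm:raise}(2). The routes to that intermediate claim, however, are genuinely different. The paper works entirely in the $L_{p}$-theory of Malliavin calculus: it combines Watanabe's product inequality $\Vert FG\Vert_{r,-\alpha}\leqslant c\,\Vert F\Vert_{p,\alpha}\Vert G\Vert_{q,-\alpha}$ (Lemma~\ref{product_est2}), the Donsker-delta bound $\delta_{x}(X_{t})\in\mathbb{D}_{q}^{-\alpha}$ for $q<1/(1-\alpha)$ (Theorem~\ref{Donsker-Delta}), and the transference estimate $\Vert f(\dot X_{t})\Vert_{p',\alpha}\leqslant c\,\Vert f\Vert_{H_{p}^{\alpha}}$ (Proposition~\ref{frac_ineq}); a careful bookkeeping of H\"older exponents then allows $r=2$. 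You instead exploit the independence $X_{0}\perp\dot X_{0}$ to realise the chaos expansion of the product as a tensor product, which collapses the $\mathbb{D}_{2}^{\gamma}$-norm into a product of one-variable sums. Your method is more elementary (pure $L_{2}$/Hilbert-space, no Meyer inequalities) and in fact needs only $f(\dot X_{0})\in L_{2}(\Omega)$ rather than positive Malliavin smoothness of $f(\dot X_{0})$; the paper's machinery, on the other hand, does not hinge on the orthogonality $\langle b,b'\rangle=0$ and would transfer to settings where the two coordinates are correlated. One small correction: Karamata--Tauberian applied to Mehler only yields the partial-sum asymptotic $\sum_{k\leqslant n}H_{k}(x)^{2}/k!\sim c(x)\,n^{1/2}$, not the pointwise $H_{n}(x)^{2}/n!\sim c(x)\,n^{-1/2}$ (the individual terms vanish along the zeros of $H_{n}$); but the partial-sum estimate, fed into an Abel summation, is precisely what is required to conclude $\sum_{n}(1+n)^{\gamma}H_{n}(x)^{2}/n!<\infty$ for $\gamma<-1/2$.
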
 %%%%%%%%%%%%%%%%%%%%%%%%

Note that the latter statement (ii)  is weaker than the well-known result in the literature (see e.g. \cite{slud91,slud94,KL-SPA}):
$$
N(x) = \int_{0}^{1} \delta_{x} ( X_{t} ) \vert \dot{X}_{t} \vert \mathrm{d}t \in \mathbb{D}_{2}^{0} = L_{2},
$$
which has been shown to hold under the Geman condition {\rm (G)}  for stationary Gaussian processes with twice differentiable covariance functions, for any level $x \in \mathbf{R}$ (see \cite{KL-AP}).

Thus, (ii) in Corollary~\ref{Cor:Ocup_Sob} does not represent the optimal outcome for the level crossing functional. An alternative approach specifically focused on this functional will be presented in future work.

\section{Examples}
\label{Sec:Examples} %@@@@@@@@@@@@@@@@@@ LABEL: Sec:Examples
%%%%%%%%%%%%%%%%%%%%%%%%%%%%%%%%%%%%%%%%

In this section, we present several covariance functions of Gaussian processes and verify whether Conditions (A1) and (A2) are satisfied. It also illustrates that these conditions are sufficient for Theorem~\ref{Thm:raise}, but not necessary.
Some of these covariance functions belong to important classes
(see e.g., \cite[Chapter~4]{WR06})
that are implemented in \texttt{PyMC} (see \cite{PyMC}). This is why we consider them, to make our theoretical result also relevant in practice.

%%%%%%%%%%%%%%%%%%%%%%%%%%%%%%%%%%%%%%%%
\begin{Eg}{\bf Squared exponential covariance}
\label{Eg:Square} %@@@@@@@@@@@@@@@@@@@@@ LABEL: Eg:Square
\vspace{1ex}

Let $\ell > 0$ and $\displaystyle r(t) := \exp ( - t^{2} / \ell^{2} )$, $t \in \mathbf{R}$.
Then, we have
\begin{equation*}
\begin{split}
F^{\prime} ( \lambda ) = \frac{1}{2 \pi} \int_{-\infty}^{\infty} r(t) \mathrm{e}^{ - i\, \lambda t }\,\mathrm{d}t
=\sqrt{ \frac{ \ell^{2} }{ 4 \pi } } \exp \Big(	- \frac{ \ell^{2} }{ 4 } \lambda^{2}\Big),\quad \lambda \in \mathbf{R},
\end{split}
\end{equation*}
thus,
\begin{equation*}
\begin{split}
b(t) = \int_{-\infty}^{\infty} F^{\prime} ( \lambda )^{1/2} \mathrm{e}^{ i\, \lambda t } \, \mathrm{d} \lambda
=\Big(	\frac{ 4 \pi^{1/2} }{ \ell }\Big)^{1/2} \exp \Big( - \frac{ 2 t^{2} }{ \ell^{2} }\Big) ,
\quad t \in \mathbf{R}.
\end{split}
\end{equation*}
In this case, Conditions (A1) and (A2) are clearly satisfied.
\end{Eg} %%%%%%%%%%%%%%%%%%%%%%%%%%

%%%%%%%%%%%%%%%%%%%%%%%%%%%%%%%%%%%%%%%%
\begin{Eg}{\bf The Mat\'{e}rn class}
\vspace{1ex}

For $\nu > 0$ and $\ell > 0$, let
\begin{equation}
\label{eq:Matern_cov} %@@@@@@@@@@@@@@@@@ LABEL: eq:Matern_cov
r(t):= \frac{ 2^{1-\nu} }{ \Gamma (\nu) } \Big(	\frac{ \sqrt{2\nu} }{ \ell } \vert t \vert \Big)^{\nu} K_{\nu}
\Big( \frac{ \sqrt{2\nu} }{ \ell } \vert t \vert \Big) , \quad t \in \mathbf{R},
\end{equation}
where $K_{\nu}$ is the modified Bessel function of the second kind of order $\nu$.
We assume, without loss of generality, that $r(0) := 1$.
First, let us recall some properties of $K_{\nu}$. Namely, for $\nu > 0$ and $x \in \mathbf{R} \setminus \{ 0 \}$,
\begin{itemize}
\item[(a)]
$ K_{\nu} (x) \underset{x\to 0}{\sim} \frac{ \Gamma ( \nu ) }{ 2 } \left( \frac{x}{2} \right)^{-\nu}$, 
\qquad (b) \qquad 
$K_{\nu} (x) \underset{x\to \infty}{\sim} \sqrt{ \frac{ \pi }{ 2 x } } \, \mathrm{e}^{-x}$,
\item[]
\item[(c)]
$\frac{\partial}{\partial x} ( x^{\nu} K_{\nu}(x) ) = - x^{\nu} K_{\nu -1}(x)$,
\item[]
\item[(d)]
$\frac{\partial K_{\nu}}{\partial x} (x) = \frac{ \nu }{ x } K_{\nu} (x) - K_{\nu +1} (x) =
- K_{\nu -1} (x) - \frac{\nu}{x} K_{\nu} (x)$.
\item[]
\end{itemize}
The first property ensures that $r(t)$ is continuous at $t=0$.
It is known that, for any $m \in \mathbf{Z}^+$,
the function $\vert x \vert^{\nu} K_{\nu} (\vert x \vert)$ is $2m$-times differentiable if and only if $\nu > m$.
Furthermore, a  Gaussian process with this covariance function is ($\lceil \nu \rceil -1$)-times differentiable in the mean-square sense (see \cite{Stein12}).

Let us fix $\nu > 2$, so that $r(t)$ is fourth differentiable. We introduce an auxiliary variable 
$ s = \frac{ \sqrt{ 2 \nu } }{ \ell }\, t$.
% and an auxiliary function defined by
% $\displaystyle\tilde{r}(t) := \vert t \vert^{\nu} K_{\nu} ( \vert t \vert )$.
Then, it holds that
$$
r(t) = \frac{ 2^{ 1 - \nu } }{ \Gamma (\nu ) } s^{\nu} K_{\nu} (s), \quad \text{for}\quad t>0.
$$
Using the above formulas, we can write
\begin{equation*}
\begin{split}
r^{\prime} (t)
&=
\frac{2^{1-\nu}}{\Gamma (\nu)}
\left[ \frac{\mathrm{d}}{\mathrm{d}s} \big( s^{\nu} K_{\nu} (s) \big) \right]
\frac{\mathrm{d}s}{\mathrm{d}t}
= - \,\frac{ \sqrt{2\nu} }{ \ell } \, \frac{ 2^{1-\nu} }{ \Gamma (\nu) } \, s^{\nu} K_{\nu -1}(s), \\
%%%%%%%%%%%%%%%%%%%%%
r^{\prime\prime} (t)
&=
\left( \frac{ \sqrt{2\nu} }{ \ell } \right)^{2} \, \frac{ 2^{1-\nu} }{ \Gamma (\nu) }
\left( - s^{\nu -1} K_{\nu -1}(s) + s^{\nu} K_{\nu -2} (s) \right) 
\; \underset{t \downarrow 0}{\sim} \; 
- \,\frac{\Gamma (\nu -1)}{2 \, \Gamma (\nu )} \left( \frac{ \sqrt{2\nu} }{ \ell } \right)^{2}
= r^{\prime\prime} (0),\\
%%%%%%%%%%%%%%%%%%%%%
r^{\prime\prime\prime} (t)
&=
\left( \frac{ \sqrt{2\nu} }{ \ell } \right)^{3} \, \frac{ 2^{1-\nu} }{ \Gamma (\nu) }
\left( 3 \, s^{\nu -1} K_{\nu -2}(s) - s^{\nu} K_{\nu -3} (s) \right)
\; \underset{t \downarrow 0}{\sim} \; 0 = r^{\prime\prime\prime} (0)
\end{split}
\end{equation*}
To compute the fourth derivative of $r$ at 0, denoted $r^{(iv)}(0)$, we proceed as follows.

As $t \downarrow 0$ (equivalently, as $s \downarrow 0$),
we have
\begin{equation*}
\begin{split}
&
\frac{ r^{\prime\prime\prime} (t) - r^{\prime\prime\prime} (0) }{t} =
\frac{1}{t} r^{\prime\prime\prime} (t) =
\frac{1}{t} \left( \frac{ \sqrt{2\nu} }{ \ell } \right)^{3} \, \frac{ 2^{1-\nu} }{ \Gamma (\nu) }
\left( 3 \, s^{\nu -1} K_{\nu -2}(s) - s^{\nu} K_{\nu -3} (s) \right) \\
& \underset{s \downarrow 0}{\sim} \;
\frac{1}{s}
\left( \frac{ \sqrt{2\nu} }{ \ell } \right)^{4} \, \frac{ 2^{1-\nu} }{ \Gamma (\nu) } 
\Big( 3 \, s^{\nu -1}
\frac{\Gamma (\nu -2)}{2} \left( \frac{s}{2} \right)^{-(\nu -2)} -  s^{\nu}\,
    \frac{\Gamma (\nu -3)}{2} \left( \frac{s}{2} \right)^{-(\nu -3)}
\Big) \\
&=
\left( \frac{ \sqrt{2\nu} }{ \ell } \right)^{4} \,\frac{ 1 }{ \Gamma (\nu) }
\left( 
    \frac{3}{4} \, \Gamma ( \nu -2 ) - \frac{1}{8} \, s^{2} \, \Gamma ( \nu -3 )
\right)
=
\left( \frac{ \sqrt{2\nu} }{ \ell } \right)^{4} \, \frac{ \Gamma ( \nu -2 ) }{ \Gamma (\nu) }\,\frac{3}{4}.
\end{split}
\end{equation*}
Thus, we obtain
$\displaystyle 
r^{(iv)} (0) = \left( \frac{ \sqrt{2\nu} }{ \ell } \right)^{4} \, \frac{ \Gamma ( \nu -2 ) }{ \Gamma (\nu) }
\,\frac{3}{4}$
and
\begin{equation*}
\begin{split}
r^{(iv)} (0) - ( r^{\prime\prime} (0) )^{2}
& =
\left( \frac{ \sqrt{2\nu} }{ \ell } \right)^{4} \,
\frac{ \Gamma ( \nu -2 ) }{ \Gamma (\nu) } \frac{3}{4} -
\left( - \,\frac{ \Gamma (\nu -1) }{2 \, \Gamma (\nu )} \Big( \frac{\sqrt{2\nu}}{\ell} \Big)^{2} \right)^{2} 
\\ &=
\frac{ (2\nu )^{2} }{ 4 \, \ell^{2} } \,
\left( \frac{3}{ \nu ( \nu - 1 ) } - \frac{1}{ ( \nu - 1 )^{2} } \right)
\\ &=
\frac{ (2\nu )^{2} }{ 4 \, \ell^{2} }\,  \frac{ 2 \nu - 3 }{ \nu^{2} ( \nu -1 ) } \,> 0.
\end{split}
\end{equation*}
Therefore, Condition (A2) is satisfied for $\nu > 2$.

\vspace{1ex}
Let us now examine (A1).
By the properties of $K_{\nu}$, it follows that $r \in L_{1} (\mathbf{R})$.
Let us define 
$\gamma := \frac{ \sqrt{ 2 \nu } }{ \ell }$
and
$ \mu := \nu + \frac{1}{2} \,(> \frac{1}{2})$. Then, the distribution function $F(\lambda)$ is determined by the relation
\begin{equation*}
\begin{split}
F^{\prime} ( \lambda )
& = \,\frac{1}{2\pi} \int_{-\infty}^{\infty} r(t) \mathrm{e}^{ - i\, \lambda t }\,\mathrm{d}t
\,= \,\frac{\Gamma ( \mu )}{\pi^{1/2} \Gamma ( \mu - \frac{1}{2} ) \gamma }
\, \Big(1 + \frac{ \lambda^{2} }{ \gamma^{2} } \Big)^{ - \mu },
\quad \lambda \in \mathbf{R}.
\end{split}
\end{equation*}
Therefore, we have
\begin{equation*}
\begin{split}
F^{\prime}( \lambda )^{1/2} =
\, C\, \frac{\Gamma ( \frac{ \mu }{ 2 } )}{\pi^{1/2} \Gamma ( \frac{ \mu }{ 2 } - \frac{1}{2} ) \gamma }
\,\Big(1 + \frac{ \lambda^{2} }{ \gamma^{2} } \Big)^{ - \frac{ \mu }{ 2 } },
\quad
\text{where}\quad 
C :=\frac{\pi^{1/4}\Gamma ( \mu )^{1/2}\Gamma ( \frac{\mu}{2} - \frac{1}{2} )\gamma^{1/2}}
{\Gamma ( \frac{\mu}{2} )\Gamma ( \mu - \frac{1}{2} )^{1/2}}.
\end{split}
\end{equation*}
Thus, if $\frac{\mu}{2} > \frac{1}{2}$, {\it i.e.},
$\nu > \frac{1}{2}$, then, $F^{\prime} \in L_{1}(\mathbf{R})$ and
\begin{equation*}
\begin{split}
b(t) = \int_{-\infty}^{\infty} F^{\prime} ( \lambda )^{1/2} \mathrm{e}^{ i\,\lambda t}\, \mathrm{d} \lambda
\,=\, C\,
\frac{2^{ 1 - ( \frac{\mu}{2} - \frac{1}{2} ) }( \gamma \vert t \vert )^{ \frac{\mu}{2} - \frac{1}{2} }}
{\Gamma ( \frac{\mu}{2} - \frac{1}{2} )}
\,K_{ \frac{\mu}{2} - \frac{1}{2} }( \gamma \vert t \vert ).
\end{split}
\end{equation*}
Now, assume that $\nu > \frac{1}{2}$.
Hence, by expressing $C$, $\gamma$ and $\mu$ in terms of $\nu$ and $\ell$,
we obtain, for $\nu > \frac{1}{2}$,
\begin{equation*}
\begin{split}
b(t)
= \left(\frac{( 2 \nu \pi )^{1/2} \Gamma ( \nu + \frac{1}{2} )}{\Gamma ( \nu )\ell }\right)^{1/2}
\frac{2^{ 1 - \frac{1}{2} ( \nu - \frac{1}{2} ) }}{\Gamma ( \frac{1}{2} ( \nu + \frac{1}{2} ) )}
\left(\frac{\sqrt{ 2 \nu }}{\ell}\vert t \vert \right)^{ \frac{1}{2} ( \nu - \frac{1}{2} ) }
\! K_{ \frac{1}{2} ( \nu - \frac{1}{2} ) }
\left(\frac{ \sqrt{ 2 \nu } }{ \ell } \vert t \vert \right),
\quad t \in \mathbf{R}.
\end{split}
\end{equation*}
By combining properties (a), (b) and (c) above, we conclude that
$b$, $b^{\prime} \in L_{1}(\mathbf{R}) \cap L_{2}(\mathbf{R}) \cap L_{\infty}(\mathbf{R})$,
which implies that Condition (A1) is satisfied for $\nu > \frac{1}{2}$.

\vspace{1ex}
Therefore, we deduce that both Conditions (A1) and (A2) are satisfied for $\nu > 2$.
\end{Eg} %%%%%%%%%%%%%%%%%%%%%%%%%%

%%%%%%%%%%%%%%%%%%%%%%%%%%%%%%%%%%%%%%%%
\begin{Eg}{\bf The Mat\'{e}rn kernel for half integer $\nu$}
\label{Eg:Matern} %@@@@@@@@@@@@@@@@@ LABEL: Eg:Matern
\vspace{1ex}

Suppose that $\nu = m + \frac{1}{2}$ where $m \in \mathbf{Z}^+$.
Then, it is known that the modified Bessel function of the second kind, $K_{\nu}$, is defined by 
\begin{equation*}
K_{\nu} (x) =
\left( \frac{ \pi }{ 2x } \right)^{1/2} \mathrm{e}^{-x} \sum_{k=0}^{m} \frac{ (m+k)! }{ k! (m-k)! } \,(2x)^{-k},
\quad \text{for $x>0$.}
\end{equation*}
According to Equation (\ref{eq:Matern_cov}), we can write
\begin{equation*}
\begin{split}
r(t)
& = \frac{2^{-2m} \pi^{1/2}}{\Gamma (\nu)}
\mathrm{e}^{ - \gamma \vert t \vert }
\sum_{k=0}^{m}\frac{ (m+k)! }{ k! (m-k)! }\,( 2 \gamma \vert t \vert )^{m-k} ,
\quad t \in \mathbf{R},
\end{split}
\end{equation*}
where we recall that $\gamma = \frac{\sqrt{2\nu}}{\ell}$.

\vspace{1ex}
Mat\'{e}rn classes, which are commonly implemented in \texttt{PyMC}, can be described as follows.
\begin{itemize}
\item[(1)]
\texttt{Matern12} (Mat\'{e}rn kernel with $\nu = 1/2$):
$$ 
r(t) = \exp \left( - \frac{\vert t \vert}{\ell} \right).
$$
This example illustrates that  Conditions (A1) or (A2) are sufficient, but not necessary for the regularization effect (see Theorem~\ref{Thm:raise}).
Indeed, we have
\begin{equation*}
\begin{split}
F(\lambda) 
&= \frac{1}{ \pi} \int_{-\infty}^{\lambda} \frac{ \ell }{ ( \ell \, x)^{2} + 1 } \, \mathrm{d}x
= \frac{1}{\pi} \left(\arctan ( \ell \, \lambda ) + \frac{\pi}{2}\right) , 
\quad
%%%%%%%%%%
F^{\prime}(\lambda) = \frac{1}{ \pi}\,\frac{ \ell }{ ( \ell \, \lambda )^{2} + 1 } ,
\end{split}
\end{equation*}
and
\begin{equation*}
\begin{split}
b(x)
&= \frac{1}{2\pi} \int_{-\infty}^{\infty} \sqrt{ \frac{1}{ \pi}\,\frac{ \ell }{ ( \ell \cdot \lambda )^{2} + 1 }}
\, \mathrm{e}^{ i \, x \lambda }\,\mathrm{d}\lambda
=\frac{1}{\pi}\,\frac{ K_{0} ( \vert x \vert / \ell ) }{\sqrt{ \ell \, \pi }}.
\end{split}
\end{equation*}
Hence, $b$ satisfies neither (A1) nor (A2).
However, the associated Gaussian process $X = \{ X_{t} \}_{t \in \mathbf{R}}$ is a stationary Ornstein–Uhlenbeck process with stationary distribution
$\mathbb{P}(X_{t} \in \mathrm{d}x) = ( \pi \ell )^{-1/2} \exp ( - x^{2} / \ell )\mathrm{d}x$,
which is covered in \cite[Corollary~1.6]{amaba:ryu}. 
According to the result, the same regularity effect as in Theorem~\ref{Thm:raise}--(1) is guaranteed.
\item[]
\item[(2)]
\texttt{Matern32} (Mat\'{e}rn kernel with $\nu = 3/2$):
$$
r(t) =\left(1 + \frac{ \sqrt{ 3 t^{2} } }{ \ell } \right) 
\exp \left(-\,  \frac{ \sqrt{ 3 t^{2} } }{ \ell }\right) .
$$
In this example, it is straightforward to verify that $b$ satisfies (A1) but not (A2).
\item[]
\item[(3)]\texttt{Matern52} (Mat\'{e}rn kernel with $\nu = 5/2$):
$$
r(t) = \left(1 + \frac{ \sqrt{ 5 t^{2} } }{ \ell } + \frac{ 5 t^{2} }{ 3 \ell^{2} } \right)
\exp \left(-\, \frac{ \sqrt{ 5 t^{2} } }{ \ell }\right) .
$$
In this example, $b$ satisfies both (A1) and (A2).
\end{itemize}
\end{Eg} %%%%%%%%%%%%%%%%%%%%%%%%%%

%%%%%%%%%%%%%%%%%%%%%%%%%%%%%%%%%%%%%%%%
\begin{Eg}{\bf $\gamma$-exponential covariance}
\vspace{1ex}

For $\ell > 0$ and $\gamma \in (0,2]$, let
$$
r(t):=\exp(- ( \vert t \vert / \ell )^{\gamma}), \quad t \in \mathbf{R}.
$$
Let $r^{\prime}(t)$ and $r^{\prime\prime}(t)$ for $t \geqslant 0$ denote the first and second right hand derivatives, respectively.

Then, $r^{\prime\prime} (0) \neq 0$ if and only if $\gamma = 1$ or $2$.
The case $\gamma = 2$ corresponds to Example~\ref{Eg:Square},
while the case $\gamma = 1$ is precisely Example~\ref{Eg:Matern}--(1).

\end{Eg} %%%%%%%%%%%%%%%%%%%%%%%%%%

%%%%%%%%%%%%%%%%%%%%%%%%%%%%%%%%%%%%%%%%
\begin{Eg}{\bf Rational quadratic covariance}
\vspace{1ex}

For $\ell , \alpha > 0$, define
$$
r(t) :=\left(1 + \frac{ t^{2} }{ 2 \alpha \ell^{2} }\right)^{-\alpha},\quad t \in \mathbf{R}.
$$
Then, we can write
\begin{equation*}
\begin{split}
r^{\prime} (t)
&=
- \left( 1 + \frac{t^{2}}{2\alpha \ell^{2}} \right)^{-\alpha -1} \frac{t}{\ell^{2}} , \\
%%%%%%%%%%%
r^{\prime\prime} (t)
&=
\left( 1 + \frac{1}{\alpha} \right)
\left( 1 + \frac{t^{2}}{2\alpha \ell^{2}} \right)^{-\alpha -2} \frac{ t^{2} }{ \ell^{4} }
\, - \,\left( 1 + \frac{t^{2}}{2\alpha \ell^{2}} \right)^{-\alpha -1} \frac{1}{\ell^{2}} , \\
%%%%%%%%%%%
r^{\prime\prime\prime} (t)
&=
- \left( 1 + \frac{1}{\alpha} \right) \left( 1 + \frac{2}{\alpha} \right)
\left( 1 + \frac{t^{2}}{2\alpha \ell^{2}} \right)^{ - \alpha - 3 } \!\left( \frac{ t }{ \ell^{2} } \right)^{3} 
+\, 3 \left( 1 + \frac{1}{\alpha} \right)
\left( 1 + \frac{t^{2}}{2\alpha \ell^{2}} \right)^{ - \alpha - 2 }\!\!\frac{t}{ ( \ell^{2} )^{2} } , \\
%%%%%%%%%%%
r^{(iv)} (t)
&=
\left( 1 + \frac{1}{\alpha} \right)
\left( 1 + \frac{2}{\alpha} \right)
\left( 1 + \frac{3}{\alpha} \right)
\left( 1 + \frac{t^{2}}{2\alpha \ell^{2}} \right)^{ - \alpha - 4 }
\left( \frac{ t }{ \ell^{2} } \right)^{4} \\
&\hspace{8mm}
- 6 \left( 1 + \frac{1}{\alpha} \right)\left( 1 + \frac{2}{\alpha} \right)
\left( 1 + \frac{t^{2}}{2\alpha \ell^{2}} \right)^{ - \alpha - 3 }
\!\frac{ t^{2} }{ ( \ell^{2} )^{3} } 
\,+\, 3 \left( 1 + \frac{1}{\alpha} \right) 
\left( 1 + \frac{t^{2}}{2\alpha \ell^{2}} \right)^{ - \alpha - 2 }
\!\!\frac{ 1 }{ ( \ell^{2} )^{2} }.
\end{split}
\end{equation*}
Therefore, 
$\displaystyle r^{\prime\prime} (0) = - \frac{ 1 }{ \ell^{2} } $
and
$\displaystyle r^{(iv)} (0)= \frac{3}{\ell^{4}}\left( 1 + \frac{1}{\alpha} \right)$,
and
\begin{equation*}
\begin{split}
r^{(iv)} (t) - ( r^{\prime\prime} (0) )^{2}
&=
\frac{3}{\ell^{4}}\left( 1 + \frac{1}{\alpha} \right)
- \left( - \frac{ 1 }{ \ell^{2} } \right)^{2}
= \frac{ 2 \alpha + 3 }{ \ell^{4} }
> 0.
\end{split}
\end{equation*}
Hence, (A2) holds.

Now, let us  examine whether Condition (A1) is satisfied.
Suppose that $\alpha > 1$. Then, $\displaystyle r \in L_{1}(\mathbf{R})$, so that
the associated spectral distribution $F(\lambda)$ is given by:
\begin{equation*}
F^{\prime} ( \lambda )
= \frac{1}{2\pi} \int_{-\infty}^{\infty} r(t) \mathrm{e}^{ - i \, \lambda t } \,\mathrm{d}t,
\quad \lambda \in \mathbf{R}.
\end{equation*}
For the integrability, observe that
$$
\left\vert \left( \frac{\mathrm{d}^{n}}{\mathrm{d}t^{n}} r(t) \right) \right\vert
= \left\vert 
\frac{\mathrm{d}^{n}}{\mathrm{d}t^{n}} \left( 1 + \frac{ t^{2} }{ 2 \alpha \ell^{2} }\right)^{-\alpha}
\right\vert 
\leqslant
C_{n} \left( 1 + \frac{ t^{2} }{ 2 \alpha \ell^{2} } \right)^{-\alpha} =\, C_{n}r(t)
$$
for some positive constant $C_{n}$.
Additionally, we have
\begin{equation*}
(-i \, \lambda)^{n} F^{\prime} ( \lambda )
\,= \,
\frac{1}{2\pi} \int_{-\infty}^{\infty} r(t)
\left( \frac{\mathrm{d}^{n}}{\mathrm{d}t^{n}} \mathrm{e}^{ - i \, \lambda t } \right) \,\mathrm{d}t \\
\, =\,  %%%%%%%%%%%%%
\frac{ (-1)^{n} }{ 2\pi }\int_{-\infty}^{\infty}\left( \frac{\mathrm{d}^{n}}{\mathrm{d}t^{n}} r(t) \right)
\mathrm{e}^{ - i \, \lambda t }\,\mathrm{d}t.
\end{equation*}
Thus, we obtain
$$
\vert \lambda\vert^{n} F^{\prime} (\lambda) <
C_{n}\, \frac{1}{2\pi} \int_{-\infty}^{\infty} \vert r(t) \mathrm{e}^{-i \, \lambda t} \vert\,\mathrm{d}t
\,=\, C_{n} \,F^{\prime}(0) < \infty
$$
for any $n \in \mathbf{N}$.
Therefore,
$$
F^{\prime} ( \lambda )^{1/2} \in \cap_{p\geqslant 1}L_{p}(\mathbf{R}, \mathrm{d}\lambda),
$$
and it follows that the function $b$ defined by Equation (\ref{eq:b}) is differentiable and satisfies (A1).

\end{Eg} %%%%%%%%%%%%%%%%%%%%%%%%%%

%%%%%%%%%%%%%%%%%%%%%%%%%%%%%%%%%%%%%%%%
\begin{Eg}{\bf Wendland's piecewise polynomial with compact support} 
\vspace{1ex}

Let $\mathcal{C}$ denote the set of all continuous functions $\varphi : [0, +\infty ) \to \mathbf{R}$
with compact support in $[0,1]$.
Define $ \mathcal{I} : \mathcal{C} \to \mathcal{C} $
by
$$
( \mathcal{I} \varphi ) (t) := \int_{t}^{+\infty} s \, \varphi (s) \,\mathrm{d}s, \quad t \geqslant 0.
$$
For each $n,s,k \in \mathbf{N}$, define $\varphi_{n}, \varphi_{s,k} \in \mathcal{C}$
by
$$
\varphi_{n} (t)
:=
\mathbf{1}_{\{ 0 \leqslant t \leqslant 1\}}
( 1 - t )^{n}, \quad t \geqslant 0
$$
and
$$
\varphi_{s,k} := \mathcal{I}^{k} \varphi_{ \lfloor s/2 \rfloor + k + 1 }.
$$
By the definition of $\mathcal{I}$ and since $\varphi_{n}(t)$ is non-increasing in $t$,
we conclude that $\varphi_{s,k} (0) > 0$ and that $\varphi_{s,k}$ is also non-increasing.

Moreover, it is known that the function $\Phi_{s,k}:\mathbf{R}^{s} \to \mathbf{R}$ defined by
$$
\Phi_{s,k} ( x )= \varphi_{s,k} ( \vert x \vert )
$$
is $2k$-times continuously differentiable and strictly positive definite (see \cite{We05}).

Now, letting $s=1$ and $k \in \mathbf{N}$,
we define
$$
r(t) := \Phi_{1,k} (t) / \Phi_{1,k} (0) = c \,( \mathcal{I}^{k} \varphi_{k+1} ) (t), \quad t \geqslant 0,
$$
where $c = ( \varphi_{1,k} (0) )^{-1} > 0$.
Since $r(\cdot)$ has a compact support, Condition (A1) is satisfied.
\vspace{1ex}
Next, we turn to (A2).
For $k \geqslant 4$, it holds that
\begin{equation*}
\begin{split}
c^{-1} \, r^{\prime}(t)
&=
- t \, ( \mathcal{I}^{k-1} \varphi_{k+1} ) (t), \\
%%%%%%%%%%%
c^{-1} \, r^{\prime\prime}(t)
&=
- ( \mathcal{I}^{k-1} \varphi_{k+1} ) (t) + t^{2} \, ( \mathcal{I}^{k-2} \varphi_{k+1} ) (t), \\
%%%%%%%%%%%
c^{-1} \, r^{\prime\prime\prime}(t)
&=
t \, ( \mathcal{I}^{k-2} \varphi_{k+1} ) (t)
+ 2 t \, ( \mathcal{I}^{k-2} \varphi_{k+1} ) (t)
- t^{3} \, ( \mathcal{I}^{k-3} \varphi_{k+1} ) (t), \\
%%%%%%%%%%%
c^{-1} \, r^{(iv)}(t)
&=
3 ( \mathcal{I}^{k-2} \varphi_{k+1} ) (t)
- 6 t^{2} \, ( \mathcal{I}^{k-3} \varphi_{k+1} ) (t)
+ t^{4} \, ( \mathcal{I}^{k-4} \varphi_{k+1} ) (t),
\end{split}
\end{equation*}
from which we deduce
\begin{equation*}
r^{\prime\prime}(0)
=  - \,\frac{ ( \mathcal{I}^{k-1} \varphi_{k+1} ) (0)}{( \mathcal{I}^{k} \varphi_{k+1} ) (0)}
\quad\text{and}\quad 
r^{(iv)}(0)
= 3 \frac{( \mathcal{I}^{k-2} \varphi_{k+1} ) (0)}{( \mathcal{I}^{k} \varphi_{k+1} ) (0)}.
\end{equation*}
Here, we compute $( \mathcal{I}^{n} \varphi_{k+1} )(0)$. Using integration by parts, we have
\begin{equation*}
\begin{split}
( \mathcal{I}^{n} \varphi_{k+1} )(0)
& = \int_{0}^{1} s \, ( \mathcal{I}^{n-1} \varphi_{k+1} )(s) \,\mathrm{d}s
= \int_{0}^{1} \frac{ s^{3} }{2} \,( \mathcal{I}^{n-2} \varphi_{k+1} )(s)\,\mathrm{d}s \\
& = \cdots
= \int_{0}^{1}\frac{ s^{2n-1} }{ (2(n-1))!}\,\varphi_{k+1}(s)\,\mathrm{d}s
= \frac{ \mathrm{B}(2n,k+2) }{ 2^{n-1} \Gamma (n)},
\end{split}
\end{equation*}
where $\mathrm{B}$ denotes the Beta function and $\Gamma$ the Gamma function.
From this, we obtain
\begin{equation*}
\begin{split}
&
r^{(iv)} (0) - ( r^{\prime\prime} (0) )^{2} \\
& = 3 \,\frac{ \mathrm{B}(2(k-2),k+2) }{ 2^{k-3} \Gamma (k-2)}\, \frac{ 2^{k-1} \Gamma (k) }{ \mathrm{B}(2k,k+2) }
- \left(-\, \frac{ \mathrm{B}(2(k-1),k+2) }{ 2^{k-2} \Gamma (k-1)}\frac{ 2^{k-1} \Gamma (k)}{ \mathrm{B}(2k,k+2) }\right)^{2} \\
& = \frac{ 18 k (3k+1) \big( 2k ( k ( 3k - 5 ) + 4 ) - 1 \big)}{ ( 2k-1 )^{2} ( 2k-3 )} > 0.
\end{split}
\end{equation*}
Hence (A2) is satisfied.

\end{Eg} %%%%%%%%%%%%%%%%%%%%%%%%%%

%%%%%%%%%%%%%%%%%%%%%%%%%%%%%%%%%%%%%%%%
\begin{Eg}{\bf The Cosine kernel}
\vspace{1ex}

The covariance function given by
\begin{equation*}
r(t) = \cos \Big(\pi \frac{ \vert t \vert }{ \ell^{2} } \Big)
\end{equation*}
is not square-integrable.
Even if a covariance function of the above form is considered around $t=0$ such that Condition (A1) is satisfied, Condition (A2) is not valid, since
$\displaystyle r^{(iv)} (0) - ( r^{\prime\prime} (0) )^{2} = 0$.
\end{Eg} %%%%%%%%%%%%%%%%%%%%%%%%%%

%%%%%%%%%%%%%%%%%%%%%%%%%%%%%%%%%%%%%%%%
\begin{Eg}{\bf The Period kernel}
\vspace{1ex}

Let $T>0$. The covariance function defined by
\begin{equation*}
r(t) = \exp \Big(-\, \frac{ \sin^{2} ( \pi \vert t \vert / T ) }{ \ell^{2} } \Big)
\end{equation*}
is not square-integrable.
If a covariance function of this form is considered around $t=0$,
Condition (A2) is satisfied, since a straightforward computation shows that
\begin{equation*}
r^{(iv)} (0) - ( r^{\prime\prime} (0) )^{2} = \frac{ 8 \pi^{4} }{ ( T \ell )^{4} } ( \ell^{2} + 1 ) > 0.
\end{equation*}
\end{Eg} %%%%%%%%%%%%%%%%%%%%%%%%%%

%%%%%%%%%%%%%%%%%%%%%%%%%%%%%%%%%%%%%%%%
\section{Proofs}
\label{Sec:Proof} %@@@@@@@@@@@@@@@@@@@@@ LABEL: Sec:Proof

%%%%%%%%%%%%%%%%%%%%%%%%%%%%%%%%%%%%%%%%
% \subsection{Proof of Proposition~\ref{r:diff'ble}}
% %%%%%%%%%%%%%%%%%%%%%%%%%%%%%%%%%%%%%%%%
%
% %%%%%%%%%%%%%%%%%%%%%%%%%%%%%%%%%%%%%%%%
% \begin{proof}[Proof of Proposition~\ref{r:diff'ble}]
% First, we shall prove that $r$ is differentiable.
% Let $t \in \mathbf{R}$ be arbitrary.
% Then, after applying a change of variable,
% we have
% \begin{equation*}
% \frac{
% 	r(t+h) - r(t)
% }{ h }
% =
% \int_{-\infty}^{\infty}
% \Big(
% \frac{1}{h}
% \int_{0}^{h}
% b^{\prime}( x+s )
% \mathrm{d}s
% \Big)
% b(x-t)
% \mathrm{d}x.
% \end{equation*}
% Since
% $
% \vert
% h^{-1}
% \int_{0}^{h}
% b^{\prime}( x+s )
% \mathrm{d}s
% \vert
% \leqslant
% \Vert b^{\prime} \Vert_{\infty}
% < +\infty
% $
% and $b \in L_{1} (\mathbf{R})$,
% we can apply the dominated convergence theorem
% to conclude
% $
% \lim_{h\to 0}\frac{r(t+h) - r(t)}{h}
% =
% \int_{-\infty}^{\infty}
% b^{\prime} (x)
% b(x-t)
% \mathrm{d}x
% $.
% The differentiablity of $r^{\prime}$ can be proved similarly.

% Now (i) and (ii) are clear.
% We give a proof of (iii).
% Since
% $b, b^{\prime} \in L_{2}(\mathbf{R})$,
% we have
% $
% r^{\prime\prime} (0)
% =
% -
% \int_{-\infty}^{\infty}
% b^{\prime} (x)
% b^{\prime}(x)
% \mathrm{d}x
% =
% - \Vert b^{\prime} \Vert_{L_{2}(\mathbf{R})}^{2}
% \leqslant 0
% $.
% Hence if
% $
% r^{\prime\prime} (0) = 0
% $,
% the function $b$
% must be a constant which is nonzero because of
% (A1)--(3),
% and this contradicts to $b \in L_{2}(\mathbf{R})$.
% Therefore $- r^{\prime\prime} (0) > 0$.
% \end{proof} %%%%%%%%%%%%%%%%%%%%%%%%%%%%

%%%%%%%%%%%%%%%%%%%%%%%%%%%%%%%%%%%%%%%%
\subsection{Proof of Proposition~\ref{Stoc-Mdf}}
%%%%%%%%%%%%%%%%%%%%%%%%%%%%%%%%%%%%%%%%

%%%%%%%%%%%%%%%%%%%%%%%%%%%%%%%%%%%%%%%%
%\begin{proof}[Proof of Proposition~\ref{Stoc-Mdf}]
Let 
$$ 
E := \{ \text{$X$ is absolutely continuous}\}.
$$
By \cite[Proposition~4]{It63/64}, we have 
$\displaystyle \mathbb{P} (E) = 1$ (on the completed probability space).
For each $t \in \mathbf{R}$,
define
$$
F(t) := \{\text{$X$ is not differentiable at $t$} \}.
$$
Then $\displaystyle \mathbf{1}_{F(t)}( \omega )$ is measurable with respect to $(t, \omega)$. For any $\omega \in E$, we have $\displaystyle \mathbf{1}_{F(t)}( \omega ) = 0$ for almost all $t \in \mathbf{R}$, so
$\displaystyle \int_{\mathbf{R}} \mathbf{1}_{F(t)}( \omega ) \,\mathrm{d}t = 0 $.
Then, we have
$$
0 
= \mathbb{E}[\int_{\mathbf{R}} \mathbf{1}_{F(t)} \,\mathrm{d}t ] 
= \int_{\mathbf{R}} \mathbb{E}[ \mathbf{1}_{F(t)}] \,\mathrm{d}t 
= \int_{\mathbf{R}} \mathbb{P} (\text{$X$ is not differentiable at $t$}) \,\mathrm{d}t ,
$$
from which the first assertion follows.

For the second assertion, let $t \in \mathbf{R}$ be such that $X$ is differentiable at $t$ almost surely.
Then, applying Fatou's lemma gives
\begin{equation*}
\begin{split}
& \mathbb{E}
\left[\left(\dot{X}_{t} - \int_{-\infty}^{\infty} b^{\prime} ( t+x ) \,\mathrm{d} w(x) \right)^{2}\right]\\
& \leqslant 
\liminf_{h \to 0} \mathbb{E}
\left[ \left( \int_{-\infty}^{\infty} \Big\{ \frac{ b ( t+h+x ) - b (t+x) }{ h } - b^{\prime} (t+x) \Big\} \,\mathrm{d} w(x) \right)^{2}\right].
\end{split}
\end{equation*}
Then, the It\^{o} isometry leads to
\begin{equation*}
\mathbb{E}
\left[ \left(\dot{X}_{t} - \int_{-\infty}^{\infty} b^{\prime} ( t+x ) \,\mathrm{d} w(x) \right)^{2}\right]
\leqslant
\liminf_{h \to 0} \int_{-\infty}^{\infty} \Big\{\frac{ b ( x+h ) - b (x) }{ h } - b^{\prime} (x) \Big\}^{2} \,\mathrm{d} x.
\end{equation*}
By Jensen's inequality, we have
\begin{equation*}
F_{h} (x) :=
\left\{\frac{ b ( x+h ) - b (x) }{ h } - b^{\prime} (x) \right\}^{2}
\leqslant
\frac{1}{h} \int_{0}^{h} \{ b^{\prime} (x+s) - 
 b^{\prime} (x) \}^{2} \mathrm{d}s =: G_h (x).
\end{equation*}
By the fundamental theorem of calculus for Lebesgue integrals, we find that
$$
\frac1h \int_{0}^{h} b^{\prime} (x+s) \,\mathrm{d}s \underset{h\to 0}{\to} b^{\prime} (x)
\quad
\text{for almost every $x \in \mathbf{R}$.}
$$
Furthermore, we have
\begin{equation*}
\int_{-\infty}^{\infty} G_{h} (x) \,\mathrm{d}x
= 2 \int_{-\infty}^{\infty} b^{\prime} (x)^{2} \,\mathrm{d}x - 2 \int_{-\infty}^{\infty} \Big( \frac{1}{h} \int_{0}^{h} b^{\prime} (x+s) \,\mathrm{d}s \Big) \,b^{\prime} (x) \,\mathrm{d}x .
\end{equation*}
But, under Conditions (A1) and (A2), it holds that
$\displaystyle \left\vert \frac1h \int_{0}^{h} b^{\prime} (x+s) \,\mathrm{d}s \right\vert
\leqslant \Vert b^{\prime} \Vert_{\infty} $ 
and $\displaystyle b^{\prime} \in L_{1}(\mathbf{R})$.
Applying the dominated convergence theorem gives
$$
\int_{-\infty}^{\infty} \left(
\frac1h \int_{0}^{h} b^{\prime} (x+s) \,\mathrm{d}s\right) \,b^{\prime} (x) \,\mathrm{d}x
\underset{h\to 0}{\to}
\Vert b^{\prime} \Vert_{L^{2}(\mathbf{R})}^{2}.
$$
Thus, we obtain
\begin{equation*}
\mathbb{E}
\left[ \Big( \dot{X}_{t} - \int_{-\infty}^{\infty} b^{\prime} ( t+x ) \,\mathrm{d} w(x) \Big)^{2}\right]
\leqslant
\liminf_{h \to 0} \int_{-\infty}^{\infty} F_{h} (x) \,\mathrm{d} x 
\leqslant
\liminf_{h \to 0} \int_{-\infty}^{\infty} G_{h} (x) \,\mathrm{d} x = 0. \qquad \Box
\end{equation*}
%\end{proof} %%%%%%%%%%%%%%%%%%%%%%%%%%%%

%%%%%%%%%%%%%%%%%%%%%%%%%%%%%%%%%%%%%%%%
\subsection{Proof of Theorem~\ref{Thm:raise}}
%%%%%%%%%%%%%%%%%%%%%%%%%%%%%%%%%%%%%%%%

Suppose that a Gaussian process $\{ X_{t} \}_{t \in \mathbf{R}}$ is given by Equation \eqref{def-W},
where $b : \mathbf{R} \to \mathbf{R}$ is square-integrable.
Then the associated covariance function $r$ satisfies \eqref{Corr-X}.
% is given by
% \begin{equation*}
% r(t)
% :=
% \int_{-\infty}^{\infty}
% b(t+x)
% b(x)
% \,\mathrm{d}x,
% \quad
% t \in \mathbf{R}.
% \end{equation*}
We assume, without loss of generality, that
$r(0) = \Vert b \Vert_{L_{2}(\mathbf{R})}^{2} = 1$
(since $X_t$ can be replaced by $X_t/\sqrt{\mathrm{Var}(X_t)}$).

\vspace{1ex}
To prove Theorem~\ref{Thm:raise}, we first establish two intermediate technical results, which may be of independent interest: Lemma~\ref{Lem:iter-asym}, which will be applied in the first part of the proof of Theorem~\ref{Thm:raise}, and Lemma~\ref{Prop:quad_est}, which will contribute to the second part.

\subsubsection{Technical lemmas}
\label{sss:tech-Lem}

\begin{Lem} %%%%%%%%%%%%%%%%%%%%%%%%%%
\label{Lem:iter-asym} %@@@@@@@@@@@@@@@@@ LABEL: Lem:iter-asym
For any $c > 0$ and $c^{\prime} \in ( 0, c^{-1/2} )$,
we have
\begin{equation*}
\iint_{ 0 \leqslant s < t \leqslant c^{\prime} }
( 1 - c \, s^{2} )^{n} \,\mathrm{d}s \,\mathrm{d}t
\,=\,  O(n^{-1/2}) \quad \text{as $n \to \infty$.}
\end{equation*}
\end{Lem}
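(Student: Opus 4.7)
My plan is to reduce the double integral to a one-dimensional one and then extract the $n^{-1/2}$ decay by a Gaussian-type scaling. Since the integrand $(1-cs^2)^n$ does not depend on $t$, integrating out the $t$-variable over $[s,c']$ yields
\[
\iint_{0\leqslant s<t\leqslant c'}(1-cs^{2})^{n}\,\mathrm{d}s\,\mathrm{d}t
=\int_{0}^{c'}(1-cs^{2})^{n}(c'-s)\,\mathrm{d}s.
\]
By the hypothesis $c'<c^{-1/2}$, we have $1-cs^{2}>0$ on $[0,c']$, so the integrand is well defined and non-negative.

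Next I would perform the substitution $u=s\sqrt{cn}$, which transforms the integral into
\[
\frac{1}{\sqrt{cn}}\int_{0}^{c'\sqrt{cn}}\Bigl(1-\frac{u^{2}}{n}\Bigr)^{n}
\Bigl(c'-\frac{u}{\sqrt{cn}}\Bigr)\,\mathrm{d}u.
\]
The goal is then to bound the integral uniformly in $n$. Here I would use the elementary inequality $(1-x/n)^{n}\leqslant \mathrm{e}^{-x}$ valid for $x\in[0,n]$. The condition $c'<c^{-1/2}$ ensures $c'\sqrt{cn}\leqslant\sqrt{n}$, so $u^{2}/n\leqslant 1$ throughout the range of integration, and hence $(1-u^{2}/n)^{n}\leqslant\mathrm{e}^{-u^{2}}$.

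Since $c'-u/\sqrt{cn}\leqslant c'$ on the range, the integral is bounded above by $c'\int_{0}^{\infty}\mathrm{e}^{-u^{2}}\,\mathrm{d}u=c'\sqrt{\pi}/2$, which is a finite constant independent of $n$. Combining with the factor $1/\sqrt{cn}$ in front, we obtain the desired $O(n^{-1/2})$ estimate. I do not anticipate any serious obstacle: the only point requiring care is that the substitution stays within the regime $u^{2}\leqslant n$ where the pointwise exponential bound applies, and this is precisely guaranteed by the assumption $c'<c^{-1/2}$.
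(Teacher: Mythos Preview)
Your argument is correct. Integrating out the $t$-variable, rescaling by $\sqrt{cn}$, and invoking the elementary bound $(1-y)^n\leqslant \mathrm{e}^{-ny}$ for $y\in[0,1]$ together with $\int_0^\infty \mathrm{e}^{-u^2}\,\mathrm{d}u=\sqrt{\pi}/2$ gives exactly the $O(n^{-1/2})$ estimate required. The only delicate point---that the substitution keeps $u^2/n<1$ so the exponential bound applies---is indeed secured by the hypothesis $c'<c^{-1/2}$, as you note.

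The paper takes a rather different route. It first writes $\int_0^{t\sqrt{c}}(1-s^2)^n\,\mathrm{d}s$ as a hypergeometric polynomial ${}_2F_1(\tfrac12,-n;\tfrac32;ct^2)$, then uses the derivative identity $\tfrac{\mathrm{d}}{\mathrm{d}t}\,{}_2F_1(a,b;c;t)=(ab/c)\,{}_2F_1(a+1,b+1;c+1;t)$ to carry out the $t$-integration in closed form, evaluates the result at $t=1$ via the Gauss hypergeometric theorem as $\sqrt{\pi}\,\Gamma(n+2)/\Gamma(n+\tfrac32)$, and finally extracts the $n^{-1/2}$ rate from Stirling's formula. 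That computation yields an explicit asymptotic constant and connects the mechanism of regularization to the Gauss hypergeometric theorem, a point the authors emphasize in the introduction. Your approach, by contrast, is shorter and entirely elementary---no special functions, no Stirling---and for the purpose of proving the $O(n^{-1/2})$ bound it is equally valid and arguably preferable.
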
 %%%%%%%%%%%%%%%%%%%%%%%%%%%%

\begin{proof} %%%%%%%%%%%%%%%%%%%%%%%%%%
We have
$$
\int_{0}^{c^{\prime}} \int_{0}^{t} ( 1 - c \, s^{2} )^{n} \,\mathrm{d}s \mathrm{d}t\,
= c^{-1/2} \int_{0}^{c^{\prime}} \int_{0}^{t\sqrt{c}} ( 1 - s^{2} )^{n} \,\mathrm{d}s\, \mathrm{d}t .
$$
Computing this last integral with series (see e.g. \cite{GradR-book}) gives
\begin{equation*}
\begin{split}
\int_{ 0 }^{ t \sqrt{c} } ( 1 - s^{2} )^{n} \,\mathrm{d}s
&= %%%%%%%
\sum_{k=0}^{n} \binom{n}{k} \frac{ (-1)^{k} }{ 2k+1 }
( c^{1/2} t )^{2k+1} \\
&= %%%%%%%
c^{1/2} t \sum_{k=0}^{\infty} \frac{( \frac{1}{2} )_{k}( -n )_{k}}{( \frac{3}{2} )_{k}}\frac{ ( c t^{2} )^{k} }{k!} =( c^{1/2} t )\cdot
\mbox{}_{2}F_{1}
({\textstyle \frac{1}{2}}, -n ;{\textstyle \frac{3}{2}}; c t^{2}),
\end{split}
\end{equation*}
where $(q)_{k}$ denotes the (rising) Pochhammer symbol
and $\mbox{}_{2}F_{1} (a,b;c;z)$ the hypergeometric function.
Thus, we obtain
\begin{equation*}
\int_{0}^{c^{\prime}} \int_{0}^{t} ( 1 - c \, s^{2} )^{n} \,\mathrm{d}s\, \mathrm{d}t
%%%%%%%%%%
\leqslant
\int_{0}^{c^{\prime}} t \cdot \mbox{}_{2}F_{1} ({\textstyle \frac{1}{2}}, -n ; {\textstyle \frac{3}{2}}; c t^{2} ) \,\mathrm{d}t
%%%%%%%%%%
\leqslant
c^{-1/2}\int_{0}^{1} t \cdot \mbox{}_{2}F_{1}
({\textstyle \frac{1}{2}}, -n ; {\textstyle \frac{3}{2}}; t^{2}) \,\mathrm{d}t .
\end{equation*}
We use the fact that
$\displaystyle
\frac{\mathrm{d}}{\mathrm{d} t}
\hspace{1mm} \mbox{}_{2}F_{1} ( a, b; c; t )
= (ab/c) \hspace{1mm} \mbox{}_{2}F_{1} ( a+1, b+1; c+1; t )
$, we can write
\begin{equation*}
t \cdot \mbox{}_{2}F_{1}
({\textstyle \frac{1}{2}}, -n ; {\textstyle \frac{3}{2}}; t^{2} )
=
\frac{1}{2(n+1)} \frac{ \mathrm{d} }{ \mathrm{d} t } \hspace{1mm} \mbox{}_{2}F_{1} ( {\textstyle -\frac{1}{2}}, -n-1 ; {\textstyle \frac{1}{2}}; t^{2} ),
\end{equation*}
and so, 
\begin{equation*}%\label{eq:int-hypergeom}
\int_{0}^{1-} t \cdot \mbox{}_{2}F_{1}
( {\textstyle \frac{1}{2}}, -n ; {\textstyle \frac{3}{2}}; t^{2} ) \,\mathrm{d} t
=
\frac{1}{2(n+1)} \Big\{ \mbox{}_{2}F_{1}
({\textstyle - \frac{1}{2}}, -n-1 ; {\textstyle \frac{1}{2}}; 1 ) - 1 \Big\}.
\end{equation*}
Noticing that $b=-n-1$ is a nonpositive integer, then $\mbox{}_{2}F_{1} (a,b;c;z)$ is a polynomial in $z$
(recall that the hypergeometric function $\mbox{}_{2}F_{1} (a,b;c;z)$ is generally defined as a convergent series in $\vert z \vert < 1$;
see e.g. \cite{GradR-book}),
hence the last identity is still valid even in the case $z=1$.
Then, since
$ a+b = - \frac{1}{2} + (-n-1 ) < \frac{1}{2} = c $,
the Gauss hypergeometric theorem tells us that
\begin{equation*}
\mbox{}_{2}F_{1}
({\textstyle - \frac{1}{2}}, -n-1 ; {\textstyle \frac{1}{2}}; 1 )
=
\frac{\Gamma ( \frac{1}{2} )\Gamma ( n + 2 )}{\Gamma ( 1 )\Gamma ( n + \frac{3}{2} )}
=
\sqrt{\pi}\, \frac{\Gamma ( n + 2 )}{\Gamma ( n + \frac{3}{2} )},
\end{equation*}
which behaves, by using Stirling's formula, as
\begin{equation*}
\sqrt{\pi} \frac{ \Gamma ( n + 2 ) }{\Gamma ( n + \frac{3}{2} )}
\approx
\sqrt{\pi} \frac{\sqrt{\frac{2\pi}{n+2}}\Big(\frac{n+2}{\mathrm{e}}\Big)^{n+2}}{
 \sqrt{\frac{2\pi}{n+\frac{3}{2}}} \Big(\frac{n+\frac{3}{2}}{\mathrm{e}}\Big)^{n+\frac{3}{2}}}
=
\sqrt{ \frac{\pi}{\mathrm{e}} }\frac{(n+2)^{ n + \frac{3}{2} }}{(n+\frac{3}{2})^{ n + 1 }}
= O(n^{1/2})
\quad
\text{as $n\to \infty$.}
\end{equation*}
We can then conclude that
\begin{equation*}
\int_{ 0 \leqslant s < t \leqslant c^{\prime}
} ( 1 - c \, s^{2} )^{n} \,\mathrm{d}s
\mathrm{d}t = O (n^{-1/2})\quad
\text{as $n\to\infty$.}
\end{equation*}
\end{proof} %%%%%%%%%%%%%%%%%%%%%%%%%%%%

Let us move on to our second lemma that is needed to prove the second part of Theorem~\ref{Thm:raise}, for which we assume that the correlation function $r$ satisfies Condition (A2).

For a matrix $M=(M_{ij})_{ij}$, we denote by
$$
\Vert M \Vert_{\mathrm{HS}} :=
\mathrm{tr} (M\hspace{0.5mm}\mbox{}^{t}\!M)
=\sqrt{ \sum_{ij} M_{ij}^{\,2} }
$$
the Hilbert--Schmidt norm of $M$. 

Introducing
\begin{equation}\label{def:matrixA}
\begin{split}
A (t)
&=
\left(\begin{array}{cc}
A_{11} (t) & A_{12} (t) \\
A_{21} (t) & A_{22} (t)
\end{array}\right)
:=
\left(\begin{array}{cc}
r (t) & {\displaystyle - \,r^{\prime} (t)/ \sqrt{ - r^{\prime\prime} (0)}}\\
{\displaystyle r^{\prime} (t) / \sqrt{ - r^{\prime\prime} (0)}}
&
{\displaystyle -\, r^{\prime\prime} (t)/(- r^{\prime\prime} (0)) } 
\end{array}\right),
\end{split}
\end{equation}
we can prove the following:
\begin{Lem}
\label{Prop:quad_est} %@@@@@@@@@@@@@@@@@ LABEL: Prop:quad_est
Under Condition (A2),
there exist $c, c^{\prime}>0$ such that $A$ defined in \eqref{def:matrixA} satisfies:
\begin{equation*}
\Vert A(t) \Vert_{\mathrm{HS}}
\leqslant
1 - c \, t^{2}
\quad
\text{for $0 \leqslant t \leqslant c^{\prime}$.}
\end{equation*}
\end{Lem}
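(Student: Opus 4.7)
The plan is a direct Taylor expansion of $\|A(t)\|_{\mathrm{HS}}^{2}$ to order $t^{2}$ near $t=0$, reading off the sign of the resulting quadratic coefficient from Condition (A2). First I would unfold the definition of the Hilbert--Schmidt norm as
\begin{equation*}
\|A(t)\|_{\mathrm{HS}}^{2} \,=\, r(t)^{2} \,+\, \frac{2\,r'(t)^{2}}{-r''(0)} \,+\, \frac{r''(t)^{2}}{r''(0)^{2}}\,,
\end{equation*}
reducing the problem to an elementary calculus exercise in three scalar smooth functions of $t$.

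The next step is to Taylor-expand $r$, $r'$, $r''$ at $0$ to the requisite orders. Using Proposition~\ref{r:diff'ble} together with the fact that $r$ is even (so $r'$ is odd and $r'''(0)=0$) gives $r(t)=1+\tfrac{r''(0)}{2}t^{2}+O(t^{4})$, $r'(t)=r''(0)\,t+O(t^{3})$, and $r''(t)=r''(0)+\tfrac{r^{(iv)}(0)}{2}t^{2}+O(t^{4})$. Squaring each and summing the three contributions, the constant terms add to $2$ and the $t^{2}$-coefficients add to
\begin{equation*}
r''(0)\,+\,\bigl(-2\,r''(0)\bigr)\,+\,\frac{r^{(iv)}(0)}{r''(0)} \,=\, \frac{r^{(iv)}(0)-(r''(0))^{2}}{r''(0)}\,,
\end{equation*}
so that $\|A(t)\|_{\mathrm{HS}}^{2} = 2 + \frac{r^{(iv)}(0)-(r''(0))^{2}}{r''(0)}\,t^{2} + O(t^{4})$.

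Condition (A2) now enters decisively: its second clause asserts that the numerator above is strictly positive, while Proposition~\ref{r:diff'ble}(iii) states that the denominator is strictly negative. Hence the $t^{2}$-coefficient is strictly negative, say $-2\gamma$ with $\gamma>0$. Choosing $c'>0$ small enough that the $O(t^{4})$ remainder is absorbed into $\gamma t^{2}$ on $[0,c']$ yields $\|A(t)\|_{\mathrm{HS}}^{2} \leqslant 2\bigl(1-\tfrac{\gamma}{2}\,t^{2}\bigr)$ there, and an elementary square-root bound (combined with the normalization of $\|\cdot\|_{\mathrm{HS}}$ that makes $\|A(0)\|_{\mathrm{HS}}=1$) produces the desired inequality $\|A(t)\|_{\mathrm{HS}} \leqslant 1-c\,t^{2}$.

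I do not anticipate any substantive obstacle: the argument is a bookkeeping calculation whose whole point is that (A2) is exactly the condition preventing the quadratic coefficient from vanishing. The one spot requiring care is the sign tracking when one assembles the three $t^{2}$-coefficients; if any of them is miscounted the combination $r^{(iv)}(0)-(r''(0))^{2}$ fails to materialize, and the link with Condition (A2) is lost.
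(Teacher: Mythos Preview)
Your approach is correct and essentially identical to the paper's: both expand $\|A(t)\|_{\mathrm{HS}}^{2}$ to order $t^{2}$, identify the quadratic coefficient as $\bigl(r^{(iv)}(0)-(r''(0))^{2}\bigr)/r''(0)$, invoke (A2) together with $r''(0)<0$ for its sign, and pass to the square root. Your observation that the constant term is $2$ (so that $\|A(0)\|_{\mathrm{HS}}=\sqrt{2}$ under the standard HS norm) is accurate and points to a normalization convention the paper leaves implicit; your parenthetical handling of this is the right way to close the gap.
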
 %%%%%%%%%%%%%%%%%%%%%%

\begin{proof}
Since $r^{\prime}$ and $r^{\prime\prime\prime}$ are odd functions,
it is straightforward to prove that $\Vert A(t) \Vert_{\mathrm{HS}}^{2}$ at $t=0$ has the following variations:
%%%%%%%%%%%%%%%%%%%%%%%%%%%%%%%%%%%%%%%%
%\begin{Lem}[Variations of $\Vert A(t) \Vert_{\mathrm{HS}}^{2}$ at $t=0$]
%We have
\begin{itemize}
\item[{\rm (1)}]
$\displaystyle
\left.\frac{ \mathrm{d} }{ \mathrm{d} t }\right\vert_{t=0}
\left(\Vert A(t) \Vert_{\mathrm{HS}}^{2}\right)
= \left.\frac{ \mathrm{d} }{ \mathrm{d} t }\right\vert_{t=0}
\left( r(t)^{2} + 2 \left(
\frac{ r^{\prime} (t) }{ \sqrt{- r^{\prime\prime} (0)}}\right)^{2} 
+ \left( \frac{ - r^{\prime\prime} (t) }{ - r^{\prime\prime} (0) } \right)^{2} \right) = 0
$.
\item[{\rm (2)}]
$\displaystyle
\left.\frac{ \mathrm{d}^{2} }{ \mathrm{d} t^{2} }\right\vert_{t=0}
\left(\Vert A(t) \Vert_{\mathrm{HS}}^{2}\right)
=
\frac{ 1 }{ r^{\prime\prime} (0) }
\big(r^{(iv)} (0) - ( r^{\prime\prime} (0) )^{2}\big)
$.
\end{itemize}
%\end{Lem} %%%%%%%%%%%%%%%%%%%%%%%%%%%%
%The proof is straightforward and hence omitted.
%
Now, under Condition (A2), using Taylor's expansion, we can conclude that there exists $c^{\prime} > 0$ such that
\begin{equation*}
\Vert A(t) \Vert_{\mathrm{HS}}^{2}
= 1 - 2 c^{\prime} \, t^{2} + O(t^{4})
\quad \text{as $t\to 0$,}
\end{equation*}
and hence,
\begin{equation*}
\Vert A(t) \Vert_{\mathrm{HS}}
= 1 - c^{\prime} \, t^{2} + O(t^{4})
\quad \text{as $t\to 0$,}
\end{equation*}
from which Lemma~\ref{Prop:quad_est} follows.
\end{proof}

%%%%%%%%%%%%%%%%%%%%%%%%%%%%%%%%%%%%%%%%
\subsubsection{Proof of Theorem~\ref{Thm:raise}--(1)}
%%%%%%%%%%%%%%%%%%%%%%%%%%%%%%%%%%%%%%%%

Assume that the function $b$ satisfies Condition (A1) and $\Vert b \Vert_{L_{2}(\mathbf{R})} = 1$.
By Proposition~\ref{r:diff'ble} and Taylor's expansion, there exists
$c>0$ and $c^{\prime} \in ( 0, c^{-1/2} )$
such that
\begin{equation}\label{eq:r-ubound}
r(t) \leqslant 1 - c \, t^{2}
\quad \text{for $0 \leqslant t \leqslant c^{\prime}$.}
\end{equation}
Consider a partition
$0 = t_{0} < t_{1} < \cdots < t_{m} = 1$
such that
$\displaystyle \max_{1 \leqslant l \leqslant m} \{ t_{l} - t_{l-1}\} < c^{\prime}$.

The chaos expansion of $\Lambda ( X_{t} )$
is given as 
$$
\Lambda ( X_{t} ) =
\sum_{n=0}^{\infty} J_{n} [ \Lambda ( X_{t} ) ],\quad\text{where}
$$
\begin{equation*}
\begin{split}
J_{n} [ \Lambda ( X_{t} ) ]
&=
n! \, \mathbb{E}
\left[
	\Lambda ( X_{t} ) \int_{x_{1} < \cdots < x_{n}} b (t+x_{1}) \cdots b (t+x_{n}) \,\mathrm{d}w(x_{1}) \cdots \mathrm{d}w(x_{n})
\right] \\
&\hspace{30mm}\times
\int_{x_{1} < \cdots < x_{n}}
b (t+x_{1}) \cdots b (t+x_{n})
\,\mathrm{d}w(x_{1}) \cdots \mathrm{d}w(x_{n}).
\end{split}
\end{equation*}
By stationarity,
$$
\mathbb{E}
\left[
	\Lambda ( X_{t} )
	\int_{x_{1} < \cdots < x_{n}}
	b (t+x_{1}) \cdots b (t+x_{n})
    \,
	\mathrm{d}w(x_{1}) \cdots \mathrm{d}w(x_{n})
\right] := c^{(n)}
$$
is independent of $t$. Therefore, we have
$$
\Vert J_{n} [ \Lambda (X_{0}) ] \Vert_{L_{2}}^{2} = 
( c^{(n)} )^{2}\, n!
$$
and thus,
\begin{equation*}
\begin{split}
\left\Vert
	\int_{t_{l-1}}^{t_{l}}
	J_{n} [ \Lambda (X_{t}) ]
	\,\mathrm{d} t
\right\Vert_{2}^{2}
&= %%%%%%%%%
2 (n!) ( c^{(n)} )^{2}
\int_{ t_{l-1} \leqslant s < u \leqslant t_{l} }
\left(
	\int_{-\infty}^{\infty}
		b(s+x)
		b(u+x)
		\,\mathrm{d}x
\right)^{n}
\mathrm{d}s
\mathrm{d}u \\
&= %%%%%%%%%
2
\Vert J_{n} [ \Lambda ( X_{0} ) ] \Vert_{L_{2}}^{2}
\int_{ 0 \leqslant s < u \leqslant t_{l} - t_{l-1} }
( r (s) )^{n}
\, \mathrm{d}s \mathrm{d}u \\
&\leqslant %%%%%%%%%
2
\Vert J_{n} [ \Lambda ( X_{0} ) ] \Vert_{L_{2}}^{2}
\int_{ 0 \leqslant s < u \leqslant c^{\prime} }
( 1 - c \, s^{2} )^{n}
\, \mathrm{d}s \mathrm{d}u .
\end{split}
\end{equation*}
by using \eqref{eq:r-ubound} in the last inequality.

Finally, applying Lemma~\ref{Lem:iter-asym}
concludes the proof of part (1) of Theorem~\ref{Thm:raise}.

%%%%%%%%%%%%%%%%%%%%%%%%%%%%%%%%%%%%%%%%
\subsubsection{Proof of Theorem~\ref{Thm:raise}--(2)}
%%%%%%%%%%%%%%%%%%%%%%%%%%%%%%%%%%%%%%%%

%\begin{proof}[Proof of Theorem~\ref{Thm:raise}]
Assume that the function $b$ satisfies Conditions (A1), (A2), and that $\Vert b \Vert_{L_{2}(\mathbf{R})} = 1$.
Since
$\langle b, b^{\prime} \rangle_{L_{2 (\mathbf{R})}}
= r^{\prime} (0) = 0$,
we see that
$$
e^{1} := b,\quad
e^{2} := b^{\prime} / \Vert b^{\prime} \Vert_{L_{2}(\mathbf{R})} = b^{\prime}/(- r^{\prime\prime}(0))
$$
forms an orthonormal system.
For each $h \in L_{2}(\mathbf{R})$ and $s \geqslant 0$, we write
$\displaystyle h_{s}(x) := h(s+x)$.
The chaos expansion for
$\displaystyle \Lambda ( X_{s}, \dot{X}_{s} )$
is then
$$
\Lambda ( X_{s}, \dot{X}_{s} ) =
\sum_{n=0}^{\infty} J_{n} [ \Lambda ( X_{s}, \dot{X}_{s} ) ], \quad \text{where}
$$
\begin{equation*}
\begin{split}
J_{n} [ \Lambda ( X_{s}, \dot{X}_{s} ) ]
\,=\,&
n!
\sum_{i_{1}, \ldots , i_{n} \in \{ 1,2 \}}
\mathbb{E}
\left[
	\Lambda ( X_{s}, \dot{X}_{s} )
	\int_{x_{1} < \cdots < x_{n}}
	e_{s}^{i_{1}} (x_{1}) \cdots e_{s}^{i_{n}} (x_{n})
	\,\mathrm{d}w(x_{1}) \cdots \mathrm{d}w(x_{n})
\right] \\
&\hspace{20mm}\times
\int_{x_{1} < \cdots < x_{n}}
e_{s}^{i_{1}} (x_{1}) \cdots e_{s}^{i_{n}} (x_{n})
\,\mathrm{d}w(x_{1}) \cdots \mathrm{d}w(x_{n}).
\end{split}
\end{equation*}
By stationarity, we have, for each $i_{1}, i_{2}, \ldots , i_{n} \in \{ 1, 2 \}$,
\begin{equation*}
\mathbb{E}
\left[
	\Lambda ( X_{s}, \dot{X}_{s} )
	\int_{x_{1} < \cdots < x_{n}}
	\prod_{k=1}^{n} e_{s}^{i_{k}} (x_{k}) 
	\,\prod_{k=1}^{n}\mathrm{d}w(x_{k}) 
\right] \\
=
\mathbb{E}
\left[
	\Lambda ( X_{0}, \dot{X}_{0} )
	\int_{x_{1} < \cdots < x_{n}}
	\prod_{k=1}^{n} e^{i_{k}} (x_{k}) 
	\,\prod_{k=1}^{n} \mathrm{d}w(x_{k}) 
\right].
\end{equation*}
Let $c, c^{\prime} > 0$ be as in
Lemma~\ref{Prop:quad_est}.
Considering a partition
$0 = t_{0} < t_{1} < \cdots < t_{m} = 1$
such that
$\displaystyle
\max_{1\leqslant l \leqslant m} \{ t_{l} - t_{l-1} \}< c^{\prime}$, and denoting 
$$
c_{(i_{1}, i_{2}, \ldots , i_{n})}
:=:c_{i_{1}, i_{2}, \ldots , i_{n}}
:=
\mathbb{E}
\left[
	\Lambda ( X_{0}, \dot{X}_{0} )
	\int_{x_{1} < \cdots < x_{n}}
	e^{i_{1}} (x_{1}) \cdots e^{i_{n}} (x_{n})
	\,\mathrm{d}w(x_{1}) \cdots \mathrm{d}w(x_{n})
\right],
$$
we can write, for each $l=1,2,\ldots , m$,
\begin{equation*}
\begin{split}
&
\left\Vert
\int_{t_{l-1}}^{t_{l}}
J_{n} [ \Lambda ( X_{s}, \dot{X}_{s} ) ]
\,\mathrm{d}s
\right\Vert_{2}^{2} \,=\,
n!
\sum_{
	\substack{
		i_{1}, \ldots , i_{n} \in \{ 1,2 \}, \\
		j_{1}, \ldots , j_{n} \in \{ 1,2 \}
	}
}
c_{ i_{1}, i_{2}, \ldots , i_{n} }
c_{ j_{1}, j_{2}, \ldots , j_{n} }\; \times\\
&
\mathbb{E}
\left[
\left\{
\int_{t_{l-1}}^{t_{l}} \!\!\mathrm{d}s
\int_{x_{1} < \cdots < x_{n}}
\prod_{k=1}^n e_{s}^{i_{k}} (x_{k}) 
\,\prod_{k=1}^n\mathrm{d}w(x_{k})
\right\}
\left\{
\int_{t_{l-1}}^{t_{l}} \!\! \mathrm{d}s
\int_{y_{1} < \cdots < y_{n}}
\prod_{k=1}^n e_{s}^{j_{k}} (y_{k}) 
\,\prod_{k=1}^n \mathrm{d}w(y_{k}) 
\right\}
\right] \\
&= %%%%%%%%%
2 ( n! )
\sum_{
	\substack{
		i_{1}, \ldots , i_{n} \in \{ 1,2 \}, \\
		j_{1}, \ldots , j_{n} \in \{ 1,2 \}
	}
}
c_{ i_{1}, i_{2}, \ldots , i_{n} }
c_{ j_{1}, j_{2}, \ldots , j_{n} }
\int_{t_{l-1} \leqslant s < u \leqslant t_{l}}
\left\{
\prod_{k=1}^{n}
\int_{-\infty}^{+\infty}
e_{s}^{i_{k}} (x)
e_{u}^{j_{k}} (x)
\,\mathrm{d}x
\right\}
\mathrm{d}s\,\mathrm{d}u .
\end{split}
\end{equation*}

Using the matrix $A$ defined in \eqref{def:matrixA}, we can write 
\begin{equation*}
A_{ij}(u-s)
= \int_{-\infty}^{+\infty} e_{s}^{i} (x) e_{u}^{j} (x)
\,\mathrm{d}x \quad \text{for $i,j \in \{ 1, 2 \}$,}
\end{equation*}
and we define, for each $n \in \mathbf{N}$ and
$
I=(i_{1}, \ldots , i_{n}), J=(j_{1}, \ldots , j_{n})
\in \{ 1, 2 \}^{n}
$,
\begin{equation*}
A_{I,J}^{(n)} (u-s)
:= \prod_{k=1}^{n} \int_{-\infty}^{+\infty} e_{s}^{i_{k}} (x) e_{u}^{j_{k}} (x)\,\mathrm{d}x
= \prod_{k=1}^{n} A_{i_{k},j_{k}} (u-s) ,
\quad \text{for $s < u$.}
\end{equation*}
We also write
$$
A^{(n)} (s) := ( A_{I,J}^{(n)} (s) )_{ I, J \in \{ 1, 2 \}^{n} } 
$$
and note that $A^{(1)} (s) \equiv A(s)$.
Then, after a change of variables, we obtain
\begin{equation*}
\begin{split}
\left\Vert \int_{t_{l-1}}^{t_{l}} J_{n} [ \Lambda ( X_{s}, \dot{X}_{s} ) ] \,\mathrm{d}s \right\Vert_{2}^{2}
= %%%%%%%%%
2 ( n! )
\int_{ 0 \leqslant s < u \leqslant t_{l} - t_{l-1} }
\langle \mathbf{c}^{(n)}, A^{(n)} ( s ) \mathbf{c}^{(n)} \rangle_{\mathbf{R}^{2^{n}}}
\, \mathrm{d}s \,\mathrm{d}u,
\end{split}
\end{equation*}
where
$\displaystyle  \mathbf{c}^{(n)} :=
( c_{I} )_{ I \in \{ 1, 2 \}^{n} }$
and
$\displaystyle \langle \cdot , \cdot \rangle_{\mathbf{R}^{2n}}$
denotes the canonical inner product on 
 $\mathbf{R}^{2n}$.

Under the canonical basis on $\mathbf{R}^{2^{n}}$,
$\{ \mathbf{e}_{I} \}_{I \in \{ 1, 2 \}^{n}}$,
defined recursively as
\begin{equation*}
\begin{split}
\mathbf{e}_{1} &:= \left(\begin{array}{c} 1 \\ 0 \end{array}\right),
\quad
\mathbf{e}_{2} := \left(\begin{array}{c} 0 \\ 1 \end{array}\right), \\
%%%%%%%%%%%%
\mathbf{e}_{11}
&:=
\left(\begin{array}{c}
\mathbf{e}_{1} \\\hdashline
\mathbf{0}_{\mathbf{R}^{2}}
\end{array}\right),
\quad
\mathbf{e}_{12}
:=
\left(\begin{array}{c}
	\mathbf{e}_{2} \\\hdashline
	\mathbf{0}_{\mathbf{R}^{2}}
\end{array}\right),
\quad
\mathbf{e}_{21}
:=
\left(\begin{array}{c}
	\mathbf{0}_{\mathbf{R}^{2}} \\\hdashline
	\mathbf{e}_{1}
\end{array}\right),
\quad
\mathbf{e}_{22}
:=
\left(\begin{array}{c}
	\mathbf{0}_{\mathbf{R}^{2}} \\\hdashline
	\mathbf{e}_{2}
\end{array}\right), \\
%%%%%%%%%%%%
& \vdots \\
%%%%%%%%%%%%
\mathbf{e}_{(1,I)}
&:=
\left(\begin{array}{c}
	\mathbf{e}_{I} \\\hdashline
	\mathbf{0}_{\mathbf{R}^{2^{n-1}}}
\end{array}\right) ,
\quad
\mathbf{e}_{(2,I)}
:=
\left(\begin{array}{c}
	\mathbf{0}_{\mathbf{R}^{2^{n-1}}} \\\hdashline
	\mathbf{e}_{I}
\end{array}\right)
\quad
\text{for $I \in \{ 1,2 \}^{n-1}$},
\end{split}
\end{equation*}
where $\mathbf{0}_{\mathbf{R}^{k}}$ denotes the zero vector in $\mathbf{R}^{k}$, 
the matrices $A^{(n)} = A^{(n)} (t)$, $n=1,2,3,\ldots$, are related as
\begin{equation*}
A^{(n)}
=
\left(\begin{array}{c:c}
A_{11} A^{(n-1)} & A_{12} A^{(n-1)} \\ \hdashline
A_{21} A^{(n-1)} & A_{22} A^{(n-1)}
\end{array}\right)
= A \otimes A^{(n-1)}
= A^{\otimes n}.
\end{equation*}
Thus, by Lemma~\ref{Prop:quad_est}, we have
\begin{equation*}
\Vert A^{(n)} (s) \Vert_{\mathrm{HS}}
= \Vert A (s) \Vert_{\mathrm{HS}}^{n} \leqslant
( 1 - c \, s^{2} )^{n}
\quad
\text{for $0 \leqslant s \leqslant c^{\prime}$, $n \in \mathbf{N}$}.
\end{equation*}
Now, by using that
\begin{equation*}
(n!)
\Vert \mathbf{c}^{(n)} \Vert_{\mathbf{R}^{2^{n}}}^{2}
=
\Vert J_{n} [ \Lambda ( X_{0}, \dot{X}_{0} ) ] \Vert_{L_{2}}^{2},
\end{equation*}
and the Cauchy--Schwartz inequality, we obtain 
\begin{equation*}
\left\Vert
	\int_{t_{l-1}}^{t_{l}}
		J_{n} [
        \Lambda ( X_{s}, \dot{X}_{s} )
        ]
	\,\mathrm{d}s
\right\Vert_{L_{2}}^{2}
\,\leqslant\, %%%%%%%%%
2\, \Vert J_{n} [ \Lambda ( X_{0}, \dot{X}_{0} ) ] \Vert_{L_{2}}^{2}
\int_{ 0 \leqslant s < u \leqslant t_{l} - t_{l-1} }
( 1 - c \, s^{2} )^{n}
\,\mathrm{d}s \,\mathrm{d}u,
\end{equation*}
from which we deduce, by Lemma~\ref{Lem:iter-asym}, that there exists a constant $c^{\prime\prime}>0$ such that
\begin{equation*}
\left\Vert
\int_{t_{l-1}}^{t_{l}} J_{n} [ \Lambda ( X_{s}, \dot{X}_{s} ) ] \,\mathrm{d}s
\right\Vert_{2}^{2}
\!\leqslant\!
c^{\prime\prime}(n+1)^{-1/2}
\left\Vert
	J_{n} [ \Lambda ( X_{0}, \dot{X}_{0} ) ]
\right\Vert_{L_{2}}^{2}
\end{equation*}
for any $n \in \mathbf{Z}^+$.
Therefore, we can write
\begin{equation*}
\begin{split}
\Vert \int_{t_{l-1}}^{t_{l}}
\Lambda ( X_{s}, \dot{X}_{s} )
&\,\mathrm{d}s 
\Vert_{2, \alpha + \frac{1}{2}}^{2}
= \sum_{n=1}^{\infty}
(1+n)^{ \alpha + \frac{1}{2} }
\Vert
\int_{t_{l-1}}^{t_{l}}
J_{n} [ \Lambda ( X_{s}, \dot{X}_{s} ) ]
\,\mathrm{d}s
\Vert_{2}^{2} \\
&\leqslant c^{\prime\prime} \sum_{n=1}^{\infty} (1+n)^{ \alpha + \frac{1}{2} } (1+n)^{-\frac{1}{2}}
\Vert J_{n} [ \Lambda ( X_{0}, \dot{X}_{0} ) ] \Vert_{2}^{2}
\leqslant c^{\prime\prime} \Vert \Lambda ( X_{0}, \dot{X}_{0} ) \Vert_{ 2, \alpha }^{2} .
\end{split}
\end{equation*}
and then,
\begin{equation*}
\left\Vert
\int_{0}^{1} \Lambda ( X_{s}, \dot{X}_{s} ) \,\mathrm{d}s
\right\Vert_{ 2, \alpha + \frac{1}{2} }
\leqslant
\sum_{l=1}^{m} \left \Vert \int_{t_{l-1}}^{t_{l}} \Lambda ( X_{s}, \dot{X}_{s} ) \,\mathrm{d}s
\right\Vert_{ 2, \alpha + \frac{1}{2} }
\leqslant (c^{\prime\prime})^{1/2} \,m\,
\Vert \Lambda ( X_{0}, \dot{X}_{0} ) \Vert_{ 2, \alpha }. \qquad \Box
\end{equation*}
% \end{proof} %%%%%%%%%%%%%%%%%%%%%%%%%%%%

%%%%%%%%%%%%%%%%%%%%%%%%%%%%%%%%%%%%%%%%
\subsection{Proof of Corollary~\ref{Cor:Ocup_Sob}}
%%%%%%%%%%%%%%%%%%%%%%%%%%%%%%%%%%%%%%%%

We will recall existing results and derive new ones, which will be useful for our proof and provide context.
\begin{Thm}[Watanabe \cite{Wa91}] %%%%%
\label{Donsker-Delta} %@@@@@@@@@@@@@@@@@ LABEL: Donsker-Delta
Let $\alpha > 0$, $p \in (1, \infty)$ and $ F \in \mathbb{D}^{\infty} (\mathbf{R}) $ be non-degenerate in the sense of Malliavin.
Then,
\begin{equation*}
\begin{split}
\delta_{x} (F) \in \mathbb{D}_{p}^{-2\alpha}
\Longleftrightarrow
\left\{\begin{array}{l}
\text{{\rm (i) $\alpha \geqslant 1/2$ and $p \in (1,\infty )$,}} \\
\text{{\rm or}} \\
\text{{\rm (ii) $0 < \alpha < 1/2$ and $1 < p < \frac{1}{1-2\alpha}$.}}
\end{array}\right.
\end{split}
\end{equation*}
\end{Thm}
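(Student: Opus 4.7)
The plan is to reduce the characterization for the Meyer--Watanabe distribution $\delta_{x}(F)$ in $\mathbb{D}_{p}^{-2\alpha}$ to the corresponding scalar question for $\delta_{x}$ in the classical Bessel potential space $H_{p}^{-2\alpha}(\mathbf{R})$, using the two analytic tools attached to a non-degenerate $F \in \mathbb{D}^{\infty}$: Watanabe's pull-back theorem, which provides the continuous extension $\mathscr{S}^{\prime}(\mathbf{R}) \ni \Lambda \mapsto \Lambda(F) \in \mathbb{D}^{-\infty}$, together with the quantitative two-sided comparison
$$
\Vert \Lambda (F) \Vert_{p,-2\alpha} \,\leqslant\, C_{F,p,\alpha}\,\Vert \Lambda \Vert_{H_{p}^{-2\alpha}(\mathbf{R})},
\qquad
\Vert f(F) \Vert_{q,2\alpha} \,\leqslant\, C_{F,q,\alpha}^{\prime}\,\Vert f \Vert_{H_{q}^{2\alpha}(\mathbf{R})},
$$
valid for conjugate exponents $1/p + 1/q = 1$. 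Once this comparison is granted, the theorem reduces to a classical Sobolev embedding.

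For the sufficiency direction, the key scalar fact is that $\delta_{x}\in H_{p}^{-2\alpha}(\mathbf{R})$ precisely when the stated conditions (i) or (ii) hold. Indeed, $H_{p}^{-2\alpha}(\mathbf{R}) = (H_{q}^{2\alpha}(\mathbf{R}))^{\prime}$ by duality, so $\delta_{x}$ defines a bounded linear functional on $H_{q}^{2\alpha}(\mathbf{R})$ if and only if $H_{q}^{2\alpha}(\mathbf{R}) \hookrightarrow C_{b}(\mathbf{R})$. The Sobolev embedding on the line gives this precisely when $2\alpha > 1/q = 1 - 1/p$, i.e. $p(1 - 2\alpha) < 1$, which rearranges exactly to (i) $\alpha \geqslant 1/2$ with any $p \in (1,\infty)$, or (ii) $\alpha \in (0,1/2)$ with $p < 1/(1-2\alpha)$. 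Applying the pull-back estimate to $\Lambda = \delta_{x}$ then yields $\delta_{x}(F) \in \mathbb{D}_{p}^{-2\alpha}$.

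For the necessity direction, I assume $\delta_{x}(F) \in \mathbb{D}_{p}^{-2\alpha}$ at a point $x$ with $p_{F}(x) > 0$ (where $p_{F}$ is the smooth density of $F$, which is $C^{\infty}$ under non-degeneracy), and test against elements $f(F)$ with $f \in \mathscr{S}(\mathbf{R})$. Using the duality $(\mathbb{D}_{q}^{2\alpha})^{\prime} = \mathbb{D}_{p}^{-2\alpha}$ and the fact that $\mathbb{E}[\delta_{x}(F) f(F)] = f(x)\,p_{F}(x)$,
$$
\vert f(x) \vert \, p_{F}(x) \,\leqslant\, \Vert \delta_{x}(F) \Vert_{p,-2\alpha}\, \Vert f(F) \Vert_{q,2\alpha} \,\leqslant\, C \, \Vert f \Vert_{H_{q}^{2\alpha}(\mathbf{R})},
$$
where the second inequality is the Meyer-type upper bound above. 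Dividing by $p_{F}(x)$ shows that $\delta_{x}$ extends to a continuous functional on $H_{q}^{2\alpha}(\mathbf{R})$, and the scalar Sobolev characterization from the sufficiency step then forces (i) or (ii).

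The main obstacle is the two-sided comparison between $\Vert \Lambda(F) \Vert_{p,-2\alpha}$ and $\Vert \Lambda \Vert_{H_{p}^{-2\alpha}(\mathbf{R})}$ at \emph{fractional} order $2\alpha$. The upper bound (used for sufficiency) hinges on Meyer's commutation inequalities between $(I - \mathcal{L})^{\alpha/2}$ and the Malliavin derivative, combined with the chain rule applied to $\Lambda(F)$ for smooth $\Lambda$ and then extended by density. The lower bound (used for necessity) rests on iterated integration by parts against the inverse Malliavin covariance matrix of $F$, whose $L_{p}$-moments are controlled by the non-degeneracy assumption $\mathbb{E}[(\det \langle DF, DF \rangle)^{-r}] < \infty$ for every $r > 0$. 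Pushing these integer-order estimates to fractional $\alpha$ is done by analytic interpolation on a complex parameter attached to the powers of $(I - \mathcal{L})$, and this interpolation step is the technical heart of Watanabe's argument in \cite{Wa91}.
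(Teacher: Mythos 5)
First, a point of comparison: the paper does not prove this statement at all --- it is recalled verbatim from Watanabe \cite{Wa91} as an external ingredient for the proof of Corollary~\ref{Cor:Ocup_Sob} --- so there is no internal proof to measure your argument against; what follows assesses your reconstruction on its own terms. Your overall strategy (reduce the Wiener-space question to the scalar fact $\delta_{x} \in H_{p}^{-2\alpha}(\mathbf{R}) \Leftrightarrow$ (i) or (ii), via pull-back/composition estimates for a non-degenerate $F \in \mathbb{D}^{\infty}$) is the natural one and matches Watanabe's framework. The sufficiency half is essentially sound: the scalar characterization is correct (the Bessel kernel $G_{2\alpha}$ behaves like $\vert y\vert^{2\alpha -1}$ at the origin in dimension one), and the negative-order pull-back bound $\Vert \Lambda (F)\Vert_{p,-2\alpha} \leqslant C \Vert \Lambda \Vert_{H_{p}^{-2\alpha}(\mathbf{R})}$ is standard at even integer orders by Malliavin integration by parts and extends by interpolation; moreover, since condition (ii) is open in $p$, the sufficiency direction would even survive an $\varepsilon$-loss in the integrability index.

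The genuine gap is in the necessity direction, precisely where the theorem has content. Your duality argument needs the positive-order composition estimate with the \emph{same} exponent, $\Vert f(F)\Vert_{q,2\alpha} \leqslant C \Vert f \Vert_{H_{q}^{2\alpha}(\mathbf{R})}$, and you justify it by ``chain rule plus Meyer inequalities plus density/interpolation.'' As written this does not go through: after the chain rule, terms such as $f^{\prime}(F)\,DF$ are estimated by H\"older against moments of $DF$, which loses an $\varepsilon$ in the exponent --- compare Proposition~\ref{frac_ineq} in this very paper, which is exactly such a composition bound and requires $p^{\prime} > p$. With any such loss your argument only yields $\delta_{x} \in H_{p^{\prime}}^{-2\alpha}(\mathbf{R})$ for all $p^{\prime} < p$, i.e.\ the non-strict inequality $p \leqslant \frac{1}{1-2\alpha}$, and thus fails to exclude the critical case $p = \frac{1}{1-2\alpha}$ for $\alpha < \frac{1}{2}$, which is the only nontrivial part of ``only if.'' To close this you must prove the exact-exponent bound, e.g.\ by conditioning on $F$ and showing that the weights $y \mapsto \mathbb{E}\bigl[\vert P(DF,\ldots ,D^{k}F)\vert^{q} \mid F=y\bigr]\,p_{F}(y)$ are bounded (this is where non-degeneracy of $F$ enters, not merely through moments of the inverse Malliavin matrix), before interpolating to fractional order; alternatively follow Watanabe's own finer analysis. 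Two further points you should make explicit: the necessity step divides by $p_{F}(x)$, so it requires $p_{F}(x) > 0$ (if $p_{F}(x) = 0$ then $\delta_{x}(F) = 0$ in $\mathbb{D}^{-\infty}$ and the stated equivalence must be read with that proviso); and the ``lower bound via integration by parts against the inverse Malliavin covariance'' you invoke for necessity is really the mechanism behind the negative-order pull-back (your sufficiency step), so as organized the necessity argument is carried entirely by the unproven exact-exponent estimate.
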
 %%%%%%%%%%%%%%%%%%%%%%%%%%%%%

It is worth noting that this result has been refined by Nualart and Vives %, who established it 
for $F$ as a Wiener integral.
\begin{Thm}[{Nualart--Vives~\cite[Theorem 2.1]{NVives92b}}] %%%%%
\label{Donsker-Delta-BM} %@@@@@@@@@@@@@@@@@ LABEL: Donsker-Delta-BM
Let $x\in \mathbf{R}$ and $W(h)$ a Wiener integral ($h\neq 0$). Then, 
\begin{equation*}
\delta_{x} (W(h)) \in \mathbb{D}_{p}^{-\beta}
\quad
\text{if} \quad \beta + \frac1p >1,\, \beta>0, \,p>1
\end{equation*}
and
\begin{equation*}
\delta_{x} (W(h)) \notin \mathbb{D}_{2}^{-1/2}. 
\end{equation*}
\end{Thm}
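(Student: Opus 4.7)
The plan is to reduce to a one-dimensional Gaussian problem and then exploit the explicit Hermite decomposition of the Dirac delta in the Gaussian measure. By scaling, we may assume without loss of generality that $\|h\|_{L_{2}(\mathbf{R})} = 1$, so that $Y := W(h) \sim N(0,1)$. Expansion of the Dirac delta with respect to the standard Gaussian measure gives the formal identity
\begin{equation*}
\delta_{x} = p(x) \sum_{n \geqslant 0} \frac{H_{n}(x)}{n!}\, H_{n}
\end{equation*}
(with $p$ the standard Gaussian density), so that the $n$-th chaos component of $\delta_{x}(Y)$ equals $J_{n}[\delta_{x}(Y)] = p(x) H_{n}(x) H_{n}(Y)/n!$ and satisfies $\|J_{n}[\delta_{x}(Y)]\|_{L_{2}}^{2} = p(x)^{2} H_{n}(x)^{2}/n!$.

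In the Hilbert case $p = 2$, the remainder of the argument is a single convergent-series computation:
\begin{equation*}
\|\delta_{x}(Y)\|_{2,-\beta}^{2} \,=\, p(x)^{2} \sum_{n \geqslant 0} (1+n)^{-\beta}\,\frac{H_{n}(x)^{2}}{n!} ,
\end{equation*}
and the Plancherel--Rotach asymptotics yield $H_{n}(x)^{2}/n! = O(n^{-1/2})$ as $n \to \infty$ for each fixed $x$, so the tail is comparable to $\sum n^{-\beta - 1/2}$. This series converges iff $\beta > 1/2$. This already gives both the $p = 2$ instance of the main claim (for which $\beta + \tfrac{1}{p} > 1$ reads $\beta > \tfrac{1}{2}$) and, by divergence of the borderline $\sum n^{-1}$, the sharp statement $\delta_{x}(W(h)) \notin \mathbb{D}_{2}^{-1/2}$.

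The main obstacle lies in the non-Hilbert case $p \neq 2$, where $\|\cdot\|_{p,\alpha}$ is no longer diagonalized by the chaos decomposition. The natural strategy is to pass to the dual identification $\mathbb{D}_{p}^{-\beta} = (\mathbb{D}_{q}^{\beta})^{\prime}$ with $1/p + 1/q = 1$, so that $\delta_{x}(W(h)) \in \mathbb{D}_{p}^{-\beta}$ reduces to the boundedness of the trace-type functional $F \mapsto p(x) \, \mathbb{E}[F \mid Y = x]$ on $\mathbb{D}_{q}^{\beta}$. The hypothesis $\beta + \tfrac{1}{p} > 1$ rewrites as $\beta > 1/q$, which is precisely the correct Sobolev-trace threshold for restricting to the codimension-$1$ affine subspace $\{Y = x\}$ of the Wiener space, analogous to the Euclidean embedding $H_{q}^{\beta}(\mathbf{R}) \hookrightarrow C(\mathbf{R})$ when $\beta > 1/q$. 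Making this rigorous requires Meyer's inequalities --- to translate the Malliavin norm $\|\cdot\|_{q,\beta}$ into a Riesz-type transform estimate in $L_{q}$ --- combined with $L_{q}$-bounds on Hermite functions with Gaussian weight via hypercontractivity, to control the conditional-expectation trace uniformly.
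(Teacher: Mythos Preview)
The paper does not prove this theorem; it merely cites it from Nualart--Vives~\cite{NVives92b} as a refinement of Watanabe's result. So there is no ``paper's own proof'' to compare against, and your proposal should be assessed on its own merits.

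Your $p=2$ argument is essentially correct and is in fact the argument of Nualart--Vives. One remark: for the negative statement $\delta_{x}(W(h)) \notin \mathbb{D}_{2}^{-1/2}$ you need more than the upper bound $H_{n}(x)^{2}/n! = O(n^{-1/2})$; you need a matching lower estimate in the averaged (Ces\`aro) sense, namely $\sum_{n \leqslant N} H_{n}(x)^{2}/n! \asymp \sqrt{N}$, which follows from the Christoffel--Darboux formula together with the Plancherel--Rotach asymptotics. With that in hand, Abel summation gives the divergence of $\sum (1+n)^{-1/2} H_{n}(x)^{2}/n!$. Your phrase ``by divergence of the borderline $\sum n^{-1}$'' hides this step.

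For $p \neq 2$ you have written a strategy, not a proof. The duality reduction to a trace estimate on $\mathbb{D}_{q}^{\beta}$ with $\beta > 1/q$ is the right heuristic, and the analogy with the Euclidean embedding $H_{q}^{\beta}(\mathbf{R}) \hookrightarrow C(\mathbf{R})$ is apt, but the sentence ``Making this rigorous requires Meyer's inequalities \ldots\ combined with $L_{q}$-bounds on Hermite functions via hypercontractivity'' is a promissory note. You have not shown how to control $\vert \mathbb{E}[F \mid Y = x] \vert$ by $\Vert F \Vert_{q,\beta}$ for general $F$ (not just $F = f(Y)$), nor how the fractional power $(I-\mathcal{L})^{-\beta/2}$ interacts with the conditioning. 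The standard route---and the one Nualart--Vives actually take---is to use the integral representation $(I-\mathcal{L})^{-\beta/2} = c_{\beta} \int_{0}^{\infty} t^{\beta/2 - 1} \mathrm{e}^{-t} P_{t}\,\mathrm{d}t$ via the Ornstein--Uhlenbeck semigroup, compute $P_{t}\delta_{x}(Y)$ explicitly as a Gaussian density in $Y$, and estimate its $L_{p}$-norm directly; the condition $\beta + 1/p > 1$ then emerges from the integrability of the resulting $t$-integral near $0$. That calculation is short and avoids Meyer's inequalities entirely.
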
 %%%%%%%%%%%%%%%%%%%%%%%%%%%%%

%%%%%%%%%%%%%%%%%%%%%%%%%%%%%%%%%%%%%%%%
\begin{Prop}[Watanabe \cite{Wa93}]
\label{product_est} %@@@@@@@@@@@@@@@@@@@ LABEL: product_est
For each $\alpha \geqslant 0$ and $1<p,q,r < \infty$ such that \\$ \frac1p + \frac1q = \frac1r$,
there exists a constant $c= c(p,q,\alpha )$ such that
\begin{equation*}
\Vert FG \Vert_{r,\alpha} \leqslant c\,  \Vert F \Vert_{p,\alpha} \Vert G \Vert_{q,\alpha}
\quad
\text{for all $(F,G) \in \mathbb{D}_{p}^{\alpha} \times \mathbb{D}_{q}^{\alpha}$.}
\end{equation*}
\end{Prop}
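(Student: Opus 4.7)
The plan is to first establish the product estimate at non-negative integer orders using Meyer's equivalence, the Leibniz rule for the Malliavin derivative, and H\"older's inequality, and then extend to all real $\alpha \geqslant 0$ by complex interpolation between an endpoint at $\alpha = 0$ (plain H\"older) and a suitable integer endpoint.

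For the integer case, I would invoke Meyer's inequalities: for every $k \in \mathbf{Z}^{+}$ and $p \in (1, \infty)$, there are constants $0 < c_{p,k} \leqslant C_{p,k} < \infty$ with
\[
c_{p,k} \Vert F \Vert_{p,k} \leqslant \sum_{j=0}^{k} \Vert D^{j} F \Vert_{L_{p}(\Omega; H^{\otimes j})} \leqslant C_{p,k} \Vert F \Vert_{p,k},
\]
where $D$ denotes the Malliavin derivative and $H = L_{2}(\mathbf{R})$. This identifies $\mathbb{D}_{p}^{k}$ with a vector-valued Sobolev space of iterated Malliavin derivatives in $L_{p}(\Omega)$, reducing the integer-order question to a pointwise estimate on Hilbert-space-valued random variables.

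Next, for smooth cylindrical $F$ and $G$, iterating the Leibniz rule for $D$ yields
\[
D^{k}(FG) = \sum_{j=0}^{k} \binom{k}{j}\, (D^{j} F) \,\widehat{\otimes}\, (D^{k-j} G),
\]
where $\widehat{\otimes}$ denotes the symmetrization of the tensor product in $H^{\otimes k}$. Applying H\"older's inequality pointwise in $\omega$ under $1/p + 1/q = 1/r$ bounds each summand in $L_{r}(\Omega; H^{\otimes k})$ by $\Vert D^{j} F \Vert_{L_{p}(\Omega; H^{\otimes j})} \Vert D^{k-j} G \Vert_{L_{q}(\Omega; H^{\otimes (k-j)})}$; summing over $j$ and over all integer orders up to $k$ and then applying Meyer's equivalence on all three sides gives $\Vert FG \Vert_{r,k} \leqslant c_{k}(p,q)\, \Vert F \Vert_{p,k} \Vert G \Vert_{q,k}$ for every $k \in \mathbf{Z}^{+}$; a density argument over the polynomial class $\mathcal{P}$ extends the estimate to all of $\mathbb{D}_{p}^{k} \times \mathbb{D}_{q}^{k}$.

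For general $\alpha \geqslant 0$, I would pass to non-integer orders by complex interpolation applied to the bilinear multiplication map $M:(F,G) \mapsto FG$. Choosing $k \in \mathbf{Z}^{+}$ with $k > \alpha$, the map $M$ is bounded $\mathbb{D}_{p}^{0} \times \mathbb{D}_{q}^{0} \to \mathbb{D}_{r}^{0}$ by plain H\"older and $\mathbb{D}_{p}^{k} \times \mathbb{D}_{q}^{k} \to \mathbb{D}_{r}^{k}$ by the preceding step; bilinear complex interpolation between these two endpoints together with the identification $[\mathbb{D}_{p}^{0}, \mathbb{D}_{p}^{k}]_{\theta} = \mathbb{D}_{p}^{\theta k}$ up to equivalent norms at $\theta = \alpha/k$ then produces the claimed estimate. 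The main technical obstacle is precisely this interpolation identification, which rests on Stein's analytic interpolation theorem for the admissible family $(I-\mathcal{L})^{-z/2}$ on the strip $0 \leqslant \mathrm{Re}\,z \leqslant k$, whose $L_{p}$-boundedness on vertical lines is furnished by Meyer's multiplier theorem for the Ornstein--Uhlenbeck semigroup; once this identification is in place, the bilinear endpoint bounds convert immediately into $\Vert FG \Vert_{r,\alpha} \leqslant c \Vert F \Vert_{p,\alpha} \Vert G \Vert_{q,\alpha}$.
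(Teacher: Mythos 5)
The paper does not prove this proposition at all: it is quoted as a known result of Watanabe \cite{Wa93}, so there is no internal argument to compare yours against. Your blind proof is essentially correct and follows the standard route by which this multiplicativity is established in the literature. The integer-order step (Meyer's equivalence between $\Vert\cdot\Vert_{p,k}$ and the $L_{p}$-norms of iterated Malliavin derivatives, the Leibniz formula $D^{k}(FG)=\sum_{j}\binom{k}{j}(D^{j}F)\widehat{\otimes}(D^{k-j}G)$, pointwise H\"older with $1/p+1/q=1/r$, and a density argument over $\mathcal{P}$) is sound. The fractional step is also viable, and you correctly identify where the real work lies: the identification $[\mathbb{D}_{p}^{0},\mathbb{D}_{p}^{k}]_{\theta}=\mathbb{D}_{p}^{\theta k}$ together with Calder\'on's bilinear complex interpolation theorem. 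One point of care: the boundedness on $L_{p}$ of the imaginary powers $(I-\mathcal{L})^{it}$ with admissible growth in $t$, which underlies that identification, does not follow from the basic form of Meyer's multiplier theorem (which treats multipliers $h(1/(1+n))$ with $h$ analytic at the origin, and $(1+n)^{it}$ is not of that form); it is usually obtained from Stein's Littlewood--Paley theory for symmetric diffusion semigroups (Laplace-transform-type multipliers) or from the interpolation results proved in \cite{Wa93} itself. Watanabe's own proof proceeds somewhat differently in presentation --- via his equivalent-norm characterizations of the fractional spaces $\mathbb{D}_{p}^{s}$, $0<s<1$, and reduction to integer orders --- but both arguments rest on the same analytic inputs (Meyer-type inequalities plus the interpolation structure of the scale $\{\mathbb{D}_{p}^{s}\}$), so your route is a legitimate, self-contained alternative provided you cite or prove the interpolation identity rather than treat it as a consequence of the multiplier theorem as usually stated.
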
 %%%%%%%%%%%%%%%%%%%%%%

From Proposition~\ref{product_est} and duality, we obtain the following result.
\begin{Lem} %%%%%%%%%%%%%%%%%%%%
\label{product_est2} %@@@@@@@@@@@@@@@@@@ LABEL: product_est2
For each $\alpha \geqslant 0$ and $1<p,q,r < \infty$ such that $\frac1p + \frac1q = \frac1r$,
there exists a constant $c= c(p,q,\alpha)$ such that
\begin{equation*}
\Vert FG \Vert_{r,-\alpha} \leqslant c\, \Vert F \Vert_{p,\alpha} \Vert G \Vert_{q,-\alpha}
\quad
\text{for all $(F,G) \in \mathbb{D}_{p}^{\alpha} \times \mathbb{D}_{q}^{-\alpha}$.}
\end{equation*}
\end{Lem}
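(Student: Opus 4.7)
The proof plan is to obtain Lemma~\ref{product_est2} from Proposition~\ref{product_est} by duality, exploiting the identification $(\mathbb{D}_{r}^{-\alpha})^{\prime} = \mathbb{D}_{r^{\prime}}^{\alpha}$ for conjugate exponents $1/r + 1/r^{\prime} = 1$, which is stated in Section~\ref{ssec:notation}.

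First, by density it suffices to prove the inequality for $F, G, H \in \mathcal{P}$, where all pairings are unambiguous expectations. For such $F$ and $G$, I would express
\begin{equation*}
\Vert FG \Vert_{r, -\alpha}
= \sup_{\substack{H \in \mathcal{P} \\ \Vert H \Vert_{r^{\prime}, \alpha} \leqslant 1}}
\vert \mathbb{E}[ FGH ] \vert,
\end{equation*}
and then rewrite $\mathbb{E}[FGH] = \mathbb{E}[G \cdot (FH)]$, interpreted as the pairing of $G \in \mathbb{D}_{q}^{-\alpha}$ with $FH$ through the duality $(\mathbb{D}_{q}^{-\alpha})^{\prime} = \mathbb{D}_{q^{\prime}}^{\alpha}$, where $1/q + 1/q^{\prime} = 1$. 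This gives
\begin{equation*}
\vert \mathbb{E}[ G \cdot (FH) ] \vert \leqslant \Vert G \Vert_{q, -\alpha} \Vert FH \Vert_{q^{\prime}, \alpha}.
\end{equation*}

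Next I would apply Proposition~\ref{product_est} to bound $\Vert FH \Vert_{q^{\prime}, \alpha}$. To invoke it I need exponents $p_{1}, p_{2}$ with $1/p_{1} + 1/p_{2} = 1/q^{\prime}$, and here the natural choice is $p_{1} = p$ and $p_{2} = r^{\prime}$. The key index check is that
\begin{equation*}
\frac{1}{p} + \frac{1}{r^{\prime}}
= \frac{1}{p} + 1 - \frac{1}{r}
= \frac{1}{p} + 1 - \Bigl(\frac{1}{p} + \frac{1}{q}\Bigr)
= 1 - \frac{1}{q}
= \frac{1}{q^{\prime}},
\end{equation*}
using the hypothesis $1/p + 1/q = 1/r$. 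Proposition~\ref{product_est} then yields $\Vert FH \Vert_{q^{\prime}, \alpha} \leqslant c \Vert F \Vert_{p, \alpha} \Vert H \Vert_{r^{\prime}, \alpha}$ with $c = c(p, r^{\prime}, \alpha)$.

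Combining the two estimates gives $\vert \mathbb{E}[FGH] \vert \leqslant c \Vert F \Vert_{p, \alpha} \Vert G \Vert_{q, -\alpha} \Vert H \Vert_{r^{\prime}, \alpha}$, and taking the supremum over $H$ with $\Vert H \Vert_{r^{\prime}, \alpha} \leqslant 1$ concludes the proof on $\mathcal{P}$; extension to $(F, G) \in \mathbb{D}_{p}^{\alpha} \times \mathbb{D}_{q}^{-\alpha}$ is by density. The only delicate point is that the product $FG$ a priori need not lie in a well-behaved space to justify the duality identity, but this is precisely why I restrict to $F, G \in \mathcal{P}$ first and then pass to the limit: there is no real obstacle, only a verification that the index arithmetic produces the correct conjugate exponent $q^{\prime}$, which is exactly the computation above.
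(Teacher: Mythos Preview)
Your proposal is correct and follows essentially the same route as the paper: both argue by duality, pair $FG$ against a test element in $\mathbb{D}_{r'}^{\alpha}$, bound $|\mathbb{E}[FGH]| \le \|G\|_{q,-\alpha}\|FH\|_{q',\alpha}$, verify the index identity $1/p + 1/r' = 1/q'$, and then invoke Proposition~\ref{product_est}. Your explicit restriction to $\mathcal{P}$ followed by density is a welcome clarification that the paper leaves implicit.
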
 %%%%%%%%%%%%%%%%%%%%%%
\begin{proof} %%%%%%%%%%%%%%%%%%%%%%%%%%
Suppose that $p,q,r$ as above. Let $r^{\prime} \in (1,\infty )$ be such that $1/r + 1/r^{\prime} = 1$.
It suffices to prove that, for any $J \in \mathbb{D}_{r^{\prime}}^{\alpha}$,
\begin{equation*}
\vert \mathbb{E}[ (FG)J ] \vert \leqslant
c \, \Vert F \Vert_{p,\alpha} \Vert G \Vert_{q,-\alpha}
\, \Vert J \Vert_{r^{\prime},\alpha},
\end{equation*}
with $c$ a positive constant.

In fact, we can write
$\displaystyle
\vert \mathbb{E}[ (FG)J ] \vert \leqslant \Vert FJ \Vert_{q^{\prime},\alpha} \Vert G \Vert_{q,-\alpha}
$,
where $1/q + 1/q^{\prime} = 1$.
Since we have
$
\frac{1}{q^{\prime}} = 1 - \frac{1}{q} = 1 - ( \frac{1}{r} - \frac{1}{p} ) 
= ( 1 - \frac{1}{r} ) + \frac{1}{p} = \frac{1}{r^{\prime}} + \frac{1}{p}
$, 
applying Proposition~\ref{product_est} provides
$\displaystyle
\Vert FJ \Vert_{q^{\prime},\alpha} \leqslant c\,  \Vert F \Vert_{p,\alpha} \Vert J \Vert_{r^{\prime},\alpha}
$,
from which the claim follows.
\end{proof} %%%%%%%%%%%%%%%%%%%%%%%%%%%%

While Nualart and Vives (see \cite{NVives92b}), as well as Imkeller et al. (see \cite{im:pv}), focused on the smoothness of Brownian local time, we instead examine the smoothness of the number of crossings of our stationary Gaussian process, applying our general method.

\begin{Prop} %%%%%%%%%%%%%%%%%%%%
\label{spec_class} %@@@@@@@@@@@@@@@@@@@@ LABEL: spec_class
For any $p \in (1,\infty )$, we have
\begin{itemize}
\item[(i)] $ \displaystyle \vert \dot{X_{t}} \vert
\in \cap_{p \in (1,\infty )} \mathbb{D}_{p}^{1} $,
\item[(ii)] $\displaystyle
\delta_{x} (X_{t}) \vert \dot{X}_{t} \vert \in \mathbb{D}_{p}^{(\frac{1}{p}-1)-} $.
\end{itemize}
\end{Prop}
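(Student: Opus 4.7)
The plan is to prove (i) via the standard Lipschitz chain rule of Malliavin calculus, and then combine (i) with Theorem~\ref{Donsker-Delta-BM} and Lemma~\ref{product_est2} to derive (ii).

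For (i), Condition (A1) gives $b^{\prime} \in L_{2}(\mathbf{R})$, so $\dot{X}_{t} = W(b_{t}^{\prime})$ is a Wiener integral and a centered Gaussian lying in $\mathbb{D}^{\infty}$. The absolute value function $y \mapsto |y|$ is Lipschitz with almost-everywhere derivative $\mathrm{sgn}(y)$ bounded by $1$; by the chain rule for Lipschitz maps (see e.g.\ Proposition~1.2.4 in \cite{Nualart-book06}), $|\dot{X}_{t}| \in \mathbb{D}_{p}^{1}$ for every $p \in (1,\infty)$, with Malliavin derivative $\mathrm{sgn}(\dot{X}_{t}) b_{t}^{\prime}$.

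For (ii), fix $p \in (1,\infty)$. Since $\beta \mapsto \mathbb{D}_{p}^{\beta}$ is a decreasing family of spaces, it suffices to show $\delta_{x}(X_{t})\vert\dot{X}_{t}\vert \in \mathbb{D}_{p}^{-\alpha}$ for every $\alpha \in (1 - 1/p, 1)$. Given such $\alpha$, pick $q \in (1,\infty)$ with
$$1 - \alpha < \frac{1}{q} < \frac{1}{p},$$
which is possible precisely because $\alpha > 1 - 1/p$. Set $1/p_{1} := 1/p - 1/q \in (0,1)$, so $p_{1} \in (1,\infty)$. Applying Theorem~\ref{Donsker-Delta-BM} to the Wiener integral $X_{t} = W(b_{t})$ (with $\Vert b_{t} \Vert_{L_{2}(\mathbf{R})} = \Vert b \Vert_{L_{2}(\mathbf{R})} > 0$ under Condition (A1)), the inequality $\alpha + 1/q > 1$ yields $\delta_{x}(X_{t}) \in \mathbb{D}_{q}^{-\alpha}$. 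By (i) and the inclusion $\mathbb{D}_{p_{1}}^{1} \subset \mathbb{D}_{p_{1}}^{\alpha}$, valid since $\alpha \leqslant 1$, one has $\vert\dot{X}_{t}\vert \in \mathbb{D}_{p_{1}}^{\alpha}$. Lemma~\ref{product_est2}, applied with exponents $(p_{1}, q, p)$ and smoothness index $\alpha$, then gives
$$\Vert \delta_{x}(X_{t}) \vert\dot{X}_{t}\vert \Vert_{p,-\alpha} \leqslant c \Vert \vert\dot{X}_{t}\vert \Vert_{p_{1},\alpha} \Vert \delta_{x}(X_{t}) \Vert_{q,-\alpha} < \infty,$$
which is the desired conclusion.

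The main delicate point is the compatibility of the three exponents. The H\"older-type relation $1/p_{1} + 1/q = 1/p$ required by Lemma~\ref{product_est2} must coexist with the sharp condition $\alpha + 1/q > 1$ from Theorem~\ref{Donsker-Delta-BM} and with $\alpha \leqslant 1$ so that (i) can be inserted. The constraint $1 - \alpha < 1/q < 1/p$ encodes exactly this compatibility, and the admissible range in $\alpha$ is non-empty if and only if $\alpha > 1 - 1/p$; this is what pins the exponent in (ii) to $(1/p - 1)-$ rather than something sharper.
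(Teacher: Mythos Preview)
Your proof is correct and follows essentially the same route as the paper's: part~(i) via the Lipschitz chain rule for $|\cdot|$ applied to the Wiener integral $\dot X_t$, and part~(ii) by combining (i) with the product estimate of Lemma~\ref{product_est2} and the Sobolev regularity of $\delta_x(X_t)$ under the H\"older-type constraint $1/p_1+1/q=1/p$ with $q<1/(1-\alpha)$. The only cosmetic difference is that the paper parametrizes by first fixing $\alpha\in(0,1)$ and the target integrability exponent (called $r$ there), and implicitly appeals to Theorem~\ref{Donsker-Delta} rather than Theorem~\ref{Donsker-Delta-BM}, but the resulting constraint and conclusion are identical.
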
 %%%%%%%%%%%%%%%%%%%%%%

\begin{proof} %%%%%%%%%%%%%%%%%%%%%%%%%%
\hspace{-2ex}\begin{itemize}
    \item[(i)] We have
$\displaystyle
D \vert \dot{X_{t}} \vert
= \mathrm{sgn} ( \dot{X_{t}} ) D \dot{X_{t}}
= \mathrm{sgn} ( \dot{X_{t}} ) \dot{b_{t}}
\in \cap_{p \in (1,+\infty )} L_{p} (H)
$,
from which result (i) follows.
Notice that, when $p=2$, result (i) reduces to a specific case of Proposition 1.1.5 in \cite{MK-HDR2005}, which states that 
$\displaystyle \vert \dot{X_{t}} \vert \in \mathbb{D}_{2}^{\alpha}$, for any $\alpha$.
    
    \item[(ii)]
    Take $\alpha \in (0,1)$ and $r \in (1, \frac{1}{1-\alpha})$ arbitrarily.
Then, choose $q \in (r, \frac{1}{1-\alpha})$
and define $p \in (1,\infty )$ by
$\displaystyle \frac{1}{p} = \frac{1}{r} - \frac{1}{q}$.
Next, applying Lemma~\ref{product_est2} yields
\begin{equation*}
\left\Vert \delta_{x} (X_{t}) \vert \dot{X}_{t} \vert \right\Vert_{r,-\alpha}
\leqslant c\,
\Vert \delta_{x} (X_{t}) \Vert_{q,-\alpha}
\big\Vert \vert \dot{X}_{t} \vert \big\Vert_{p,\alpha}
\leqslant c\,
\Vert \delta_{x} (X_{t}) \Vert_{q,-\alpha}
\big\Vert \vert \dot{X}_{t} \vert \big\Vert_{p,1}
< +\infty,
\end{equation*}
which establishes result (ii).
\end{itemize}
\vspace{-3ex}
\end{proof} %%%%%%%%%%%%%%%%%%%%%%%%%%%%

Let us recall one final result, needed to complete the proof of Corollary~\ref{Cor:Ocup_Sob}.

%%%%%%%%%%%%%%%%%%%%%%%%%%%%%%%%%%%%%%%%
\begin{Prop}[{\cite[Lemma~4.10]{amaba:ryu}}]
\label{frac_ineq} %@@@@@@@@@@@@@@@@@@@@@ LABEL: frac_ineq
For any $\alpha \in \mathbf{R}$,
$p \in (1,\infty )$
and
$p^{\prime} > p$,
there exists
$c = c(s,p,p^{\prime}) > 0$
such that
\begin{equation*}
\Vert \Lambda (\dot{X}_{t}) \Vert_{p, \alpha}
\leqslant
c \Vert \Lambda \Vert_{ H_{p^{\prime}}^{\alpha} (\mathbf{R}) }
\end{equation*}
for any $t \in \mathbf{R}$
and
$\Lambda \in H_{p^{\prime}}^{\alpha} (\mathbf{R})$, where $H_{p^{\prime}}^{\alpha} (\mathbf{R})$ denotes a Bessel potential space (defined in Section~\ref{Sec:Results}).
\end{Prop}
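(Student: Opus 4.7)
The strategy is to identify $\Lambda\mapsto\Lambda(\dot X_{t})$ with the restriction of a linear map from the Bessel potential space $H^{\alpha}_{p'}(\mathbf{R})$ into a one-dimensional Gaussian Sobolev space on $\mathbf{R}$, and then to compare the two scales by analytic means. Two ingredients drive everything: first, $\dot X_{t}$ is, up to a fixed multiplicative constant, a standard Gaussian lying in the first Wiener chaos; second, the Gaussian density is bounded (and in every $L_{q}$), which is exactly what lets H\"older swap the reference measure $\mathrm{d}x$ for $\gamma(\mathrm{d}x)$ at an at-most mild cost in integrability.

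Step 1 (normalization and reduction). By Proposition~\ref{r:diff'ble} the constant $\sigma^{2}:=-r''(0)$ is strictly positive and finite, so after the affine change of variable $\Lambda(\cdot)\mapsto\Lambda(\sigma\,\cdot)$ inside the Bessel norm (which costs only a multiplicative constant depending on $\sigma,\alpha,p'$) one may assume $\dot X_{t}=W(e_{t})$ for a unit vector $e_{t}\in L_{2}(\mathbf{R})$, so that $\dot X_{t}$ is standard Gaussian. Using the Hermite/chaos expansion, the map $\Lambda\mapsto\Lambda(W(e_{t}))$ sends the orthonormal Hermite basis of $L_{2}(\mathbf{R},\gamma)$ to $\{H_{n}(W(e_{t}))/\sqrt{n!}\}_{n\geqslant 0}$ and intertwines the one-dimensional Ornstein--Uhlenbeck operator $L_{\gamma}=\partial_{x}^{2}-x\partial_{x}$ on $L_{p}(\mathbf{R},\gamma)$ with the restriction of $\mathcal{L}$ to the closed cylinder subspace of $L_{p}(\Omega)$. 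Combining this identification with Meyer's equivalence of norms yields, uniformly in $t\in\mathbf{R}$,
$$\|\Lambda(\dot X_{t})\|_{p,\alpha}\;\asymp\;\|(I-L_{\gamma})^{\alpha/2}\Lambda\|_{L_{p}(\gamma)}\;=:\;\|\Lambda\|_{W^{\alpha}_{p,\gamma}}.$$
The claim is thus reduced to the deterministic embedding $H^{\alpha}_{p'}(\mathbf{R})\hookrightarrow W^{\alpha}_{p,\gamma}$ with embedding constant depending only on $\alpha,p,p'$.

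Step 2 (proving the embedding). For $\alpha=0$, H\"older's inequality with conjugate exponents $p'/p$ and $p'/(p'-p)$ gives
$$\|\Lambda\|_{L_{p}(\gamma)}\;\leqslant\;\Big(\int\gamma(x)^{p'/(p'-p)}\,\mathrm{d}x\Big)^{(p'-p)/(pp')}\|\Lambda\|_{L_{p'}(\mathbf{R})},$$
with finite first factor because $\gamma\in\bigcap_{q\geqslant 1}L_{q}(\mathbf{R})$. For $\alpha=k\in\mathbf{N}$, the Meyer--Watanabe gradient equivalence applied to $F=\Lambda(W(e_{t}))$, combined with $D^{j}F=\Lambda^{(j)}(W(e_{t}))\,e_{t}^{\otimes j}$ and $\|e_{t}\|_{L_{2}}=1$, gives $\|\Lambda\|_{W^{k}_{p,\gamma}}\asymp\sum_{j=0}^{k}\|\Lambda^{(j)}\|_{L_{p}(\gamma)}$; applying the $\alpha=0$ estimate to each derivative and invoking the classical Calder\'on identification $\sum_{j\leqslant k}\|\Lambda^{(j)}\|_{L_{p'}(\mathbf{R})}\asymp\|\Lambda\|_{H^{k}_{p'}(\mathbf{R})}$ closes this case. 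General $\alpha\in\mathbf{R}$ then follows by complex interpolation between consecutive integer orders (both $\{H^{\alpha}_{p'}\}_{\alpha}$ and $\{W^{\alpha}_{p,\gamma}\}_{\alpha}$ are complex interpolation scales) and, for the negative range, by duality via $(W^{\alpha}_{p,\gamma})^{*}=W^{-\alpha}_{q,\gamma}$ and $(H^{\alpha}_{p'})^{*}=H^{-\alpha}_{q'}$ with conjugate exponents satisfying $q>q'$, which preserves the required direction of integrability loss.

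The main obstacle I anticipate is the transition from integer to non-integer $\alpha$: the Mehler kernel of $L_{\gamma}$ and the Euclidean heat kernel have incompatible geometries (the former contracts on $\mathbf{R}$, the latter dilates), so $(I-L_{\gamma})^{\alpha/2}$ and $(1-\Delta)^{\alpha/2}$ cannot be compared pointwise, and the interpolation step has to be carried out abstractly rather than by any explicit kernel bound. The strict inequality $p'>p$ is exactly the slack absorbed by the H\"older swap in Step~2 and by the complex interpolation at fractional orders; at integer orders one can in fact take $p'=p$ thanks to the boundedness $\gamma\leqslant(2\pi)^{-1/2}$, but the additional room is natural (and convenient) when passing to the fractional scale. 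If the interpolation step proves delicate, a clean backup is to write $(I-L_{\gamma})^{-\beta/2}=\Gamma(\beta/2)^{-1}\int_{0}^{\infty}s^{\beta/2-1}\mathrm{e}^{-s}T_{s}\,\mathrm{d}s$ for the Mehler semigroup $T_{s}$, dominate each $T_{s}\Lambda(x)$ pointwise by a Gaussian convolution, and apply the H\"older device of Step~2 inside the subordination integral.
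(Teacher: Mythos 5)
A preliminary remark: the paper itself gives no proof of this proposition — it is imported verbatim from \cite[Lemma~4.10]{amaba:ryu} — so your argument has to stand on its own, and for $\alpha\geqslant 0$ it essentially does. Your Step~1 is correct and can even be stated more strongly: since the pull-back $U\Lambda:=\Lambda(W(e_{t}))$ by a unit-norm Wiener integral intertwines the one-dimensional Mehler semigroup with the Ornstein--Uhlenbeck semigroup on Wiener space (second quantization of the projection onto $\mathbf{R}e_{t}$), one has the exact identity $\Vert\Lambda(W(e_{t}))\Vert_{p,\alpha}=\Vert(I-L_{\gamma})^{\alpha/2}\Lambda\Vert_{L_{p}(\gamma)}$, uniformly in $t$; no Meyer equivalence is needed at that point. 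Your Step~2 then handles $\alpha=0$ and integer $\alpha$ correctly (Meyer's inequalities plus the H\"older swap of $\mathrm{d}x$ for $\gamma\,\mathrm{d}x$), and the passage to fractional positive $\alpha$ by complex interpolation is legitimate, since both scales interpolate (for the Wiener scale this is exactly \cite{Wa93}).

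The genuine gap is the negative range. The dualities $(W^{\alpha}_{p,\gamma})^{*}=W^{-\alpha}_{q,\gamma}$ and $(H^{\alpha}_{p'})^{*}=H^{-\alpha}_{q'}$ are taken with respect to \emph{different} pairings, $\int fg\,\gamma\,\mathrm{d}x$ versus $\int fg\,\mathrm{d}x$. Consequently, dualizing your positive-order embedding does not return the embedding $H^{-\beta}_{p'}\subset W^{-\beta}_{p,\gamma}$ you need; it returns boundedness of \emph{multiplication by the Gaussian density}, $M_{\gamma}:W^{-\beta}_{q,\gamma}\to H^{-\beta}_{q'}$, i.e.\ a map from the Gaussian scale into the Euclidean scale — the opposite direction. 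Equivalently, what must be bounded to treat $\alpha=-\beta<0$ is the adjoint of the pull-back, $U^{*}G=\gamma\cdot\mathbb{E}[G\mid W(e_{t})=\cdot\,]$, as a map $\mathbb{D}_{q}^{\beta}\to H^{\beta}_{q'}(\mathbf{R})$ with $\beta>0$ and $q'<q$, and this is a different operator from $U$, not covered by Step~2. It does follow from an additional, density-weighted reverse estimate: writing $\mathbb{E}[G\mid W(e_{t})]=g(W(e_{t}))$, conditional expectation contracts the Gaussian Sobolev norm (it commutes with the Ornstein--Uhlenbeck semigroup), so it suffices to prove $\Vert g\gamma\Vert_{H^{\beta}_{q'}(\mathbf{R})}\leqslant c\,\Vert g\Vert_{W^{\beta}_{q,\gamma}}$; for integer $\beta$ this is Leibniz plus $\gamma^{(m)}=(-1)^{m}H_{m}\gamma$ and H\"older (the polynomial factors are absorbed by a fractional power of $\gamma$, which is where $q'<q$ is genuinely used), and then one interpolates as before. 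Your subordination "backup" suffers from the same limitation: it only re-proves the nonnegative-order estimate for $U$, not the adjoint bound. So as written, the proposition is established only for $\alpha\geqslant 0$; with the extra weighted lemma above, your scheme closes for all $\alpha\in\mathbf{R}$.
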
 %%%%%%%%%%%%%%%%%%%%%%

%%%%%%%%%%%%%%%%%%%%%%%%%%%%%%%%%%%%%%%%
%\begin{proof}[Proof of Corollary~\ref{Cor:Ocup_Sob}]

Now, let $p \in (1,\infty)$, $\frac{1}{2} < \alpha \leqslant 1$, and
$f \in H_{p}^{\alpha} (\mathbf{R})$.
Let $ r \in ( \frac{p}{p+1}, \frac{p}{1+p(1-\alpha )} )$ be arbitrary.
Take $p^{\prime} \in ( p, \infty )$ such that 
$ \frac{ p^{\prime} }{ p^{\prime} + 1 } < r $.
Then, it holds that
$ \frac{ p^{\prime} }{ p^{\prime} + 1 } < r < \frac{ p^{\prime} }{ 1 + p^{\prime} ( 1-\alpha ) }
$.
Let us define $q \in (1, \infty )$ by
$\displaystyle 
\frac{1}{q} = \frac{1}{r} - \frac{1}{p^{\prime}}
$;
so, we have $1 < q < \frac{1}{1-\alpha}$.
Now, by using Lemma~\ref{product_est2} and Proposition~\ref{frac_ineq}, with $c$ a positive constant that may change from line to line, we obtain
\begin{equation*}
\begin{split}
\Vert \delta_{x} ( X_{t} ) f ( \dot{X}_{t} ) \Vert_{r, -\alpha} 
\leqslant c \Vert \delta_{x} ( X_{t} ) \Vert_{q, -\alpha} \Vert f ( \dot{X}_{t} ) \Vert_{p^{\prime}, \alpha}
\leqslant c \Vert \delta_{x} ( X_{t} ) \Vert_{q, -\alpha} \Vert f \Vert_{ H_{p}^{\alpha} (\mathbf{R}) }.
\end{split}
\end{equation*}
%for some constants $c,c^{\prime} > 0$.
By Theorem~\ref{Donsker-Delta}, the last quantity is finite.

Note that $r$ can be chosen as $r=2$; hence, we obtain
\begin{equation*}
\delta_{x} ( X_{t} ) f ( \dot{X}_{t} ) \in \mathbb{D}_{2}^{(-\frac{1}{2})-},
\end{equation*}
from which, and by combining with Theorem~\ref{Thm:raise}, we deduce the result.
\hfill $\Box$
%\end{proof} %%%%%%%%%%%%%%%%%%%%%%%%%%%%

\vspace{2ex}
{\bf Acknowledgments.}
This work benefited greatly from mutual visits to France, Switzerland, and Japan; the authors are grateful to the hosting institutions.
T.~A. was partially supported by funding from Fukuoka University (Grant No. 197102) and by JSPS KAKENHI Grant Number 22K03345.

%%%%%%%%%%%%%%%%%%%%%%%%%%%%%%%%%%%%%%%%
%%%%%%%%%%%% BIBLIOGRAPHY %%%%%%%%%%%%%%
%%%%%%%%%%%%%%%%%%%%%%%%%%%%%%%%%%%%%%%%

\end{document}